\newcommand{\eqlaw}{\stackrel{\mbox{\tiny law}}{=}}
\DeclareMathAlphabet{\mathbbm}{U}{bbm}{m}{n}
\DeclareFontFamily{U}{BOONDOX-calo}{\skewchar\font=45 }
\DeclareFontShape{U}{BOONDOX-calo}{m}{n}{
  <-> s*[1.05] BOONDOX-r-calo}{}
\DeclareFontShape{U}{BOONDOX-calo}{b}{n}{
  <-> s*[1.05] BOONDOX-b-calo}{}
\DeclareMathAlphabet{\mcb}{U}{BOONDOX-calo}{m}{n}
\SetMathAlphabet{\mcb}{bold}{U}{BOONDOX-calo}{b}{n}
\setlist{noitemsep,topsep=4pt}
\newcommand{\mrd}{\,\mathrm{d}}
\def\DeclareSymbol#1#2#3{\expandafter\gdef\csname MH@symb@#1\endcsname{\tikz[baseline=#2,scale=0.15,draw=symbols,line join=round]{#3}}\expandafter\gdef\csname MH@symb@#1s\endcsname{\scalebox{0.7}{\tikz[baseline=#2,scale=0.15,draw=symbols,line join=round]{#3}}}}
\def\<#1>{\csname MH@symb@#1\endcsname}
\newcommand{\cut}{\mathfrak{C}}
\newcommand{\mcE}{\mathcal{E}}
\newcommand{\mcC}{\mathcal{C}}
\newcommand{\mcS}{\mathcal{S}}
\newcommand{\mcL}{\mathcal{L}}
\newcommand{\mcD}{\mathcal{D}}
\newcommand{\T}{\mathbf{T}}
\def\${|\!|\!|}
\def\scal#1{{\langle#1\rangle}}
\def\CS{\mathcal{S}}
\newcommand{\mfB}{\mathfrak{B}}
\newcommand{\mfA}{\mathfrak{A}}
\newcommand{\mfC}{\mathfrak{C}}
\newcommand{\mfR}{\mathfrak{R}}
\newcommand{\mfp}{\mathfrak{p}}
\def\cC{\mathscr{C}}
\def\combplus[#1,#2,#3,#4]{\binom{#1\ {\scriptstyle #4} }{#2\ #3}}
\def\singlescalegenvert[#1,#2]{\hat{H}^{#2}_{#1}}
\def\multiscalegenvert[#1,#2]{H^{#2}_{#1}}
\def\nr[#1]{\tilde{N}[#1]} 
\def\inn[#1]{\mathring{N}[#1]}
\def\nrinn[#1]{\hat{N}_{#1}} 
\def\nrmod[#1,#2]{\tilde{N}_{#1}(#2)}
\def\nrinnmod[#1,#2]{\hat{N}_{#1}(#2)}
\def\ident[#1]{\underline{#1}}
\def\mylink#1#2{\mathrel{\vbox{\offinterlineskip\ialign{%
    \hfil##\hfil\cr
    $\scriptscriptstyle#1$\cr
    \noalign{\kern0.1ex}
    $#2$\cr
}}}}
\def\mysublink[#1]#2#3{\mathrel{\vbox{\offinterlineskip\ialign{%
    \hfil##\hfil\cr
    $\scriptscriptstyle#2$\cr
    \noalign{\kern0.1ex}
    $#3$\cr
    \noalign{\kern-0.2ex}
    \smash{\raisebox{-\height}{\hbox{$\scriptscriptstyle #1$}}}\cr
    \noalign{\kern0.2ex}
}}}}
\def\fon[#1]{\cC_{#1}}
\def\mincompproj[#1]{\mfp_{#1}}
\def\Proj_#1{\mathop{\mathrm{Proj}_{#1}}}
\def\negrenorm[#1]{\mfR_{#1}}
\def\topnegrenorm[#1]{\overline{\mfR}_{#1}}
\def\quotedge[#1]{E^{q}_{#1}}
\def\posrenorm[#1]{\mcC_{#1}}
\def\topposrenorm[#1]{\overline{\mcC_{#1}}}
\def\cutsmod[#1]{\mathbb{C}_{+,#1}}
\def\fullcutsmod[#1]{\cut_{#1}}
\colorlet{symbols}{black!50}
\colorlet{testcolor}{green!60!black}
\colorlet{darkblue}{blue!60!black}
\colorlet{darkgreen}{green!60!black}
\colorlet{redkernel}{red!80}
\def\symbol#1{\textcolor{symbols}{#1}}
\def\1{\mathbf{\symbol{1}}}
\def\mathcolor#1#{\mathcoloraux{#1}}
\newcommand*{\mathcoloraux}[3]{%
  \protect\leavevmode
  \begingroup
    \color#1{#2}#3%
  \endgroup
}
    \pgfmathsetlength{\pgf@xb}{\pgfkeysvalueof{/pgf/outer xsep}}%
    \pgfmathsetlength{\pgf@yb}{\pgfkeysvalueof{/pgf/outer ysep}}%
\definecolor{connection}{rgb}{0.7,0.1,0.1}
\tikzset{
root/.style={circle,fill=black!50,inner sep=0pt, minimum size=3mm},
        dot/.style={circle,fill=black,inner sep=0pt, minimum size=1.2mm},
        sdot/.style={circle,fill=black,inner sep=0pt,minimum size=.5mm},
        dotred/.style={circle,fill=black!50,inner sep=0pt, minimum size=2mm},
        var/.style={circle,fill=black!10,draw=black,inner sep=0pt, minimum size=3mm},
        kernel/.style={semithick,shorten >=2pt,shorten <=2pt},
        kernel1/.style={thick},
        kernels/.style={snake=zigzag,shorten >=2pt,shorten <=2pt,segment amplitude=1pt,segment length=4pt,line before snake=2pt,line after snake=5pt,},
        rho/.style={densely dashed,semithick,shorten >=2pt,shorten <=2pt},
           testfcn/.style={dotted,semithick,shorten >=2pt,shorten <=2pt},
           tau/.style={circle,inner sep=1pt,draw=black,fill=white,text=black,thin},
        renorm/.style={shape=circle,fill=white,inner sep=1pt},
        labl/.style={shape=rectangle,fill=white,inner sep=1pt},
        xic/.style={very thin,circle,fill=symbols,draw=black,inner sep=0pt,minimum size=1.2mm},
        xi/.style={very thin,circle,fill=blue!10,draw=black,inner sep=0pt,minimum size=1.2mm},
        xix/.style={crosscircle,fill=blue!10,draw=black,inner sep=0pt,minimum size=1.2mm},
	xib/.style={very thin,circle,fill=blue!10,draw=black,inner sep=0pt,minimum size=1.6mm},
	xie/.style={very thin,circle,fill=green!50!black,draw=black,inner sep=0pt,minimum size=1.6mm},
	xid/.style={very thin,circle,fill=symbols,draw=black,inner sep=0pt,minimum size=1.6mm},
	xibx/.style={crosscircle,fill=blue!10,draw=black,inner sep=0pt,minimum size=1.6mm},
	kernels2/.style={very thick,draw=connection,segment length=12pt},
	not/.style={thin,circle,fill=symbols,draw=connection,fill=connection,inner sep=0pt,minimum size=0.5mm},
	>=stealth,
  }
\newtheorem{assumption}[lemma]{Assumption}
\colorlet{darkblue}{blue!90!black}
\colorlet{darkred}{red!90!black}
\colorlet{darkgreen}{green!70!black}
\def\K{\mathfrak{K}}
\def\${|\!|\!|}
\def\Wick#1{\,\colon\!\! #1 \colon\!}
\newcommand{\Law}{\mathrm{Law}}
\def\?{{\color{red}?}}
\newcommand{\floor}[1]{\lfloor #1 \rfloor}
\def\fancynorm#1{{\talloblong #1 \talloblong}}
\newcommand{\bPsi}{\boldsymbol{\Psi}}
\newcommand{\Cov}{\mathrm{Cov}}
\def\slash{\kern0.18em/\penalty\exhyphenpenalty\kern0.18em}
\def\dash{\kern0.18em--\penalty\exhyphenpenalty\kern0.18em}
\newtheorem{example}[lemma]{Example}
\let\basepoint\logof
\def\logof{\mathord{{\basepoint}}} 
\title{Non-Gaussianity of invariant measures to SPDEs in Da Prato--Debussche regime}
\author{Ajay~Chandra$^1$ and Ilya~Chevyrev$^{2}$}
\institute{Imperial College London, UK \and SISSA, Trieste, Italy}
\date{\today}
\begin{document}
\maketitle
\begin{abstract}
We propose an elementary method to show non-Gaussianity of invariant measures of parabolic stochastic partial differential equations with polynomial non-linearities in the Da Prato--Debussche regime. The approach is essentially algebraic and involves using the generator equation of the SPDE at stationarity. Our results in particular cover the $\Phi^4_\delta$ measures in dimensions $\delta<\frac{14}{5}$, which includes cases where the invariant measure is singular with respect to the invariant measure of the linear solution.
\end{abstract}
\setcounter{tocdepth}{2}

\tableofcontents

\section{Introduction}\label{sec: intro}
\textbf{Background.}
There has been significant interest in recent years in studying Euclidean quantum field theories (EQFT) using methods from stochastic analysis.
One of the most prominent of these methods is parabolic stochastic quantisation, wherein one studies an EQFT using its Langevin dynamic (or stochastic quantisation equation),
an idea first proposed by Parisi--Wu \cite{ParisiWu}.

By EQFT, we mean a Gibbs-type probability measure on $\CS'(\R^d)$ given heuristically by
\begin{equ}[eq:Gibbs]
\mu(\mrd u) \propto \exp(-S(u)) \mcD u
\end{equ}
where $\mcD u$ is a formal Lebesgue measure on $\CS'(\R^d)$ and $S\colon \CS'(\R^d)\to \R$ is an action.
The Langevin dynamic, in this case, is the stochastic (partial) differential equation
\begin{equ}[eq:Langevin]
\partial_t u = -\frac12 \nabla S(u) + \xi
\end{equ}
where $\xi$ is a space-time white noise on $\R\times\R^d$ with respect to some metric on $\CS'(\R^d)$ (we will always take the $L^2$ metric) and $\nabla S$ is the gradient of $S$ with respect to the same metric.

Of course, both \eqref{eq:Gibbs} and \eqref{eq:Langevin} are formal as written and giving rigorous meaning to them is often challenging.
For example, the $\Phi^{p+1}_d$ measures, for which $S(u) = \int_{\R^d}\frac{1}{2}|\nabla u|^2 + \frac{1}{p+1}|u|^{p+1}$ with $p\geq 3$ an odd integer,
were the subject of intensive investigation in the field of constructive quantum field theory, see e.g. \cite{GlimmJaffe}. 
Their construction in dimension $d\geq 2$ is non-trivial and requires a renormalisation of the Lagrangian density $\frac12|\nabla u|^2 + \frac{1}{p+1}|u|^{p+1}$.
For such EQFT's, the Langevin dynamic \eqref{eq:Langevin} is then given by the SPDE
\begin{equ}[eq:Langevin_Phi4]
\partial_t u = \Delta u - u^p + \xi\;,
\end{equ}
whose solution theory, even locally-in-time, is non-trivial and requires a renormalisation of the equation for $d\geq 2$ due to the singularity of $u^p$.

A celebrated result of Da Prato--Debussche (DPD) \cite{DPD03_SQ}
gave a rigorous meaning to so-called strong solutions to \eqref{eq:Langevin_Phi4} on the 2-dimensional torus $\T^2$
using smooth approximations. Their result can be approximately stated as follows: let $\xi^\eps$ be a smooth approximation of the white noise $\xi$ on $\R\times\T^2$ at scale $\eps>0$.
Then there exist constants $\sigma^2_\eps$ with $\lim_{\eps\downarrow0}\sigma^2_\eps = \infty$ such that the solution $u^\eps\colon [0,\infty)\times\T^2\to\R$ to
the regularised and renormalised SPDE
\begin{equ}[eq:Langevin_Phi4_renorm]
\partial_t u^\eps = \Delta u^\eps - H^{\sigma^2_\eps}_{p}(u^\eps) + \xi^\eps 
\end{equ}
converges to a Markov process $u$ globally in time for $\mu$-a.e. initial condition, where $H^{\sigma^2_\eps}_{p}$ is the $p$-th Hermite polynomial with variance $\sigma^2_\eps$
and $\mu$ is the $\Phi^p_2$ measure on $\T^2$, which is invariant for $u$.
Their method is based on a simple yet very useful decomposition of the solution, which we review in Section \ref{sec:DPD}.
(The restriction to $\mu$-a.e. initial condition in \cite{DPD03_SQ} comes from the use of the invariant measure in the passage from local to global solutions.
Later, \cite{Mourrat_Weber_17_Phi42} showed existence of global solutions for all initial conditions using only PDE arguments.)

Later on, several solution theories appeared
that were able to solve \eqref{eq:Langevin_Phi4} in dimension $d=3$ and even in fractional dimension $d<4$ (known as the subcritical or super-renormalisable regime), see \cite{Hairer14,GIP15}
and \cite{Kupiainen2016,CH16,CC18_Phi43,BHZ19,BCCH17,OW,Duch21,LOTT24} for a highly incomplete sample of further developments.
These solution theories in large part extend rough path theory \cite{Lyons},
designed to solve singular \emph{ODEs},
to higher dimensions and an important step is to incorporate renormalisation like in \eqref{eq:Langevin_Phi4_renorm},
which is typically not present in the ODE setting (though see \cite{KPZ,BCF18_renorm_SDEs,BCFP19}).
The method of DPD can be seen as an important precursor to these theories,
and the threshold at which one can solve renormalised equations like \eqref{eq:Langevin_Phi4}-\eqref{eq:Langevin_Phi4_renorm} without appealing to rough path-type arguments is now sometimes called the `DPD regime'.
We note that a similar method appeared for the stochastic Navier--Stokes equations \cite{DPD02_SNS}
and that the DPD decomposition can also be useful for non-singular equations in obtaining a priori estimates, see e.g. \cite{CHM23}.

An important motivation for analysing the Langevin dynamic \eqref{eq:Langevin} is its potential use in studying the measure \eqref{eq:Gibbs}.
Indeed, the study of the dynamic has led to new constructions of the $\Phi^4_d$ measures for $d<4$ \cite{MW17Phi43,AK20,GH21,MoinatWeber20,CMW23,DGR_24_fractional,EW_24_fractional},
proofs of new tail bounds \cite{Hairer_Steele_22},
and proof of exponential decay of correlations in exponential models \cite{GHN_24_decay}.
The Langevin dynamic has also been studied for Yang--Mills--Higgs models in dimensions 2 and 3 \cite{Shen18,CCHS_2D,CCHS_3D,CS23_invariant, BringmannCaoHiggs},
the sine-Gordon model \cite{HaoSG, Chandra_Hairer_Shen_18_SG,CFW24,BC24SG}, and tensor field theories \cite{CF_24_tensor}.
For other stochastic analytic approaches to EQFTs, see \cite{BG20, GM24}.

\textbf{Our contribution.}
The main purpose of this article is to present an elementary and, as far as we can tell, new method to prove non-Gaussianity of invariant measures of SPDEs
like \eqref{eq:Langevin_Phi4} in the DPD regime (including noises that are more singular than white noise on $\R\times\T^2$).
Our method is essentially algebraic and uses the generator equation of the SPDE at stationarity, i.e. an infinite-dimensional Euler--Lagrange equation.
This equation, as remarked in \cite{BHST87I}, is essentially an integrated, second order integration by parts (Dyson--Schwinger) identity (see Remark \ref{rem:DS_eq}),
but its use in the current context appears new.

We remark that, by a recent result of \cite{Hairer_24_singular}, the DPD regime, to which our method applies, is larger than the regime in which the invariant measure is absolutely continuous with respect to the Gaussian free field.
(For this latter regime, proving non-Gaussianity is even simpler than our proposed method.)
Furthermore, our method applies to polynomial equations with non-local terms such as the $\Phi^3_2$ model, see Remarks \ref{rem:Phi31} and \ref{rem:Phi32} (see also the end of Section \ref{sec:DPD} for comments on the Sine-Gordon model).

Along the way, we review the local well-posedness result of DPD and make precise the form of the generator of $u$.
There has been interest lately in constructing the generator for singular stochastic equations.
For example, \cite{Gubinelli_Perkowski_20_generator} constructs the generator for the stochastic Burgers equation.
In comparison, the problem of constructing the generator
for \eqref{eq:Langevin_Phi4} in the DPD regime is much simpler but still has subtleties which we aim to clarify.
For example, we need to view $u$ as a Markov process with a state space of `rough distributions', which has empty intersection with smooth functions, and this viewpoint is a bit different from other works we are aware of.

\textbf{Related works.}
Non-Gaussianity is an important property of an EQFT as it is a necessary condition for the physical quantum field theory, obtained by going to Minkowski space from Euclidean space via Wick rotation, to
be an interacting theory.
Our result that the invariant measure of $u$ is non-Gaussian is certainly not new.
In the case of two dimensions, non-Gaussianity is an immediate consequence of the asymptotic moment formula obtained in \cite{Dimock74}, and the work \cite{MS77} carries out the construction and proves non-Gaussianity in the harder three dimensional case. 
Closer to our setting is \cite{BFS83_Phi4}  which uses Dyson--Schwinger equations, in combination with skeleton inequalities, to give constructions and proofs of non-Gaussianity of the $\Phi^4_2$ and $\Phi^4_3$ measures for small coupling strengths.
Our arguments, we believe, are more elementary but do not cover $\Phi^4_3$ (but do apply to all coupling strengths).

Our use of the Langevin dynamic to prove non-Gaussianity is also not the first: \cite{GH21,Hairer_Steele_22, Albeverio_Kusuoka_21_rotation} show non-Gaussianity of $\Phi^4_3$ (at all coupling strengths) by using the dynamic, which is \textit{outside} the DPD regime and is thus, in principle, more general.
Our approach, however, is different and somewhat simpler,
and demonstrates yet another use of the Langevin dynamic in the study of EQFTs.

\textbf{Article structure.}
In Section \ref{sec:classical} we demonstrate the idea of our method on a classically well-posed equation that does not require renormalisation,
namely \eqref{eq:Langevin_Phi4} posed on $\R_+\times\T$ and driven by white noise.

In Section \ref{sec:DPD}, we turn to the DPD regime.
In Section \ref{subsec:DPD_trick} we review the DPD argument, presenting a viewpoint that allows us to make precise the generator of the SPDE in Section \ref{sec:Markov}.
In Section \ref{sec:non_gauss} we use the generator to give an elementary proof of non-Gaussianity of the SPDEs under consideration.

Finally, in Appendix \ref{app:abs_cont},
we show that the invariant measure of the SPDEs we consider are, in general, singular with respect to the invariant measure of the linear solution (i.e. the GFF).

\subsection{Notation}
Let $\T^d=\R^d/\Z^d$ denote the torus.
We will implicitly identify $\T^d$ with $[-\frac12,\frac12)^d$ and identify functions on $\T^d$ with periodic functions on $\R^d$.
We write $\scal{\xi,f}$ for the pairing between a distribution $\xi$ and function $f$.
We let $x\wedge y = \min\{x,y\}$.

We denote $X=O(Y)$ and $X\lesssim Y$ to mean that there exists $C>0$ such that $X\leq CY$, where the dependencies of $C$ are clear from the context.

We write $\CC$ and $\CC^0$ for the space of continuous functions and bounded measurable functions respectively.
For $\alpha>0$, we let $\CC^\alpha(\T^d)$ denote the closure of smooth functions $f\colon \T^d \to \R$ under the usual norm
\begin{equ}
\|f\|_{\CC^\alpha(\T^d)}
\eqdef
\max_{|k|<\floor\alpha} \|\partial^k f\|_\infty + \max_{|k|=\floor{\alpha}}\|\partial^{k} f\|_{\CC^{\alpha-\floor{\alpha}}}
\end{equ}
where, for $\eta\in [0,1]$,
\begin{equ}
\|f\|_{\CC^\eta} = \sup_{x\neq y} |x-y|^{-\eta}|f(x)-f(y)|\;,
\end{equ}
and where $\floor\alpha$ is the floor of $\alpha$ and $\partial^k f$ is the usual $k$-th derivative of $f$ for a multi-index $k\in \N^d$, $\N = \{0,1,\ldots\}$,
and where we denote $|k| = \sum_{i=1}^d k_i$.
We similarly let $\CC^k(\R^d)$ denote space of $k$-times continuously differentiable functions on $\R^d$.

For $\K\subset \R\times\T^d$, we make the same definition for $\CC^\alpha(\K)$ except that, for a multi-index $k = (k_0,\ldots, k_d)\subset \N^{1+d}$, we use the parabolic scaling $|k| = 2k_0 + \sum_{i=1}^d k_i$.

For $\alpha<0$, we write $\CC^\alpha(\T^d) \subset \CD'(\T^d)$
for the closure of smooth functions under the
(inhomogenous) H\"older--Besov norm
\begin{equ}
	\|\xi\|_{\CC^\alpha(\T^d)} = 
	\sup_{x\in\T^d}\sup_{\phi \in \CB^r}
	\sup_{\lambda\in (0,1]} \lambda^{-\alpha}|\scal{\xi,\phi^\lambda_x}|\;,
\end{equ}
where $r=-\floor{\alpha}+1$, $\CB^r$ is the set of all $\phi \in \CC^\infty(\R^d)$ with support in the ball $\{|z|<\frac14\}$ and $\|\phi\|_{\CC^r(\R^d)}\leq 1$,
and where $\phi^\lambda_x\in\CC^\infty(\T^d)$ is given by $\phi^\lambda_x (z) = \lambda^{-d}\phi((z-x)\lambda^{-1})$.
For $\K\subset \R\times\T^d$, we define $\CC^\alpha(\K)$ 
analogously except that the first supremum is over $x\in\K$, and, in the second supremum, $\CB^r$ is taken as the set of all $\phi \in \CC^\infty(\R\times \R^d)$ with support in $\{|z|<\frac14\}$ and $\|\phi\|_{\CC^r(\R \times \R^d)}\leq 1$
and where we denote $\phi^\lambda_x (z) = \lambda^{-d-2}\phi((z_0-x_0)\lambda^{-2},(\tilde z-\tilde x)\lambda^{-1})$
and $z= (z_0,\tilde z)\in \R\times\T^d$.
For any $\alpha\in\R$, if no domain is specified, we let $\CC^\alpha$  denote $\CC^{\alpha}(\T^d)$.

For a random variable $X$, we write $X\sim\mu$ to mean that the law of $X$ is $\mu$.

\section{Non-triviality in the classical case}
\label{sec:classical}

To demonstrate the idea, we use the dynamic to show that invariant measures of a class of classically well-posed SPDEs are non-Gaussian.
The basic example to keep in mind is $\Phi^{p+1}_1$ for $p\geq 3$ an odd integer.

We fix space dimension $d=1$ in this section.
We use the shorthand $O = (-1,2)\times\T\subset\R\times\T$.
Consider the classically well-posed stochastic PDE
\begin{equ}\label{eq:SPDE_classical}
(\partial_t -\Delta) u = P(u) + \xi\;, \qquad u_0\in \CC(\T)\;,
\end{equ}
where $P$ is, for now, any function $P\colon \CC(\T)\to\CC(\T)$ for which there exists $\ell>0$ such that, for all $x,y\in\CC(\T)$,
\begin{equ}[eq:P_Lip]
\|P(x)-P(y)\|_{\infty} \leq \|x-y\|_\infty (2+\|x\|_\infty+\|y\|_\infty)^\ell\;.
\end{equ}
Furthermore, $\xi$ is a stationary space-time Gaussian distribution with covariance $\delta_0 \otimes \Cov_{\xi} \in \CS'(\R\times\T)$
for some $\Cov_{\xi} \in \CS'(\T)$ such that convolution with $\Cov_{\xi}$ can be extended\footnote{Essentially, $\Cov_{\xi}$ should be no worse than a Dirac delta.}to a bounded operator $L^2(\T) \rightarrow L^2(\T)$.
Note that $\xi$ is white in time and $\Cov_\xi$ refers only to the \emph{spatial} covariance of $\xi$.
Moreover,
by a Kolmogorov-type argument (see e.g. \cite[Thm.~2.7]{Chandra_Weber_17_SPDEs}), for all $\alpha<\frac12$ and $p\in [1,\infty)$,
\begin{equ}[eq:xi_moments]
\E\|\xi\|_{\CC^{\alpha-2}(O)}^p < \infty\;.
\end{equ}
The local Lipschtiz bound \eqref{eq:P_Lip} readily implies
the following deterministic local well-posedness result.

\begin{proposition}\label{prop:well-posed}
Suppose $\alpha \in (0,\frac12)$ and $\xi\in\CC^{\alpha-2}(O)$ and $u_0\in \CC(\T)$.
Then there exists $\kappa>0$, depending only on $\alpha$, such that, for $T = (2+\|u_0\|_{\infty}+ \|\xi\|_{\CC^{\alpha-2}(O)})^{-1/\kappa} \in (0,1)$,
\eqref{eq:SPDE_classical} admits a unique solution $u \in \CC([0,T],\CC(\T))$ and
\begin{equ}
\sup_{s\in [0,T]}\|u_s\|_{\infty} \leq 2\|u_0\|_{\infty}+1\;.
\end{equ}
\end{proposition}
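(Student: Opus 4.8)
The plan is to put \eqref{eq:SPDE_classical} into mild (Duhamel) form and solve it by a contraction mapping argument on a short time interval, with the horizon $T$ calibrated to the size of the data so that the local Lipschitz bound \eqref{eq:P_Lip} becomes a genuine contraction estimate. First I would isolate the linear part: let $Z_t \eqdef \int_0^t e^{(t-s)\Delta}\xi_s\,\mrd s$, the solution on $[0,T]\times\T$ of $(\partial_t-\Delta)Z=\xi$ with $Z_0=0$. A standard parabolic Schauder estimate (applicable since $\alpha\in(0,\tfrac12)$ and $[0,T]\subseteq[0,1]$, with $O$ comfortably containing $[0,T]\times\T$) gives $Z\in\CC([0,T],\CC^\alpha(\T))$ with $\|Z\|_{\CC([0,T],\CC^\alpha)}\lesssim\|\xi\|_{\CC^{\alpha-2}(O)}$, the implied constant being uniform over $T\le 1$; and since $Z$ vanishes at time $0$, parabolic H\"older continuity in time upgrades this to the small-horizon bound $\sup_{t\in[0,T]}\|Z_t\|_\infty\lesssim T^{\alpha/2}\|\xi\|_{\CC^{\alpha-2}(O)}$.

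Next I would run a Banach fixed point for $u$ directly in $\CC([0,T],\CC(\T))$ via the map
\begin{equ}
\Psi(u)_t \eqdef e^{t\Delta}u_0 + Z_t + \int_0^t e^{(t-s)\Delta}P(u_s)\,\mrd s\;,
\end{equ}
whose fixed points are exactly the (mild) solutions of \eqref{eq:SPDE_classical} on $[0,T]$. I would work on the closed ball $B=\{u\in\CC([0,T],\CC(\T)):\sup_{t}\|u_t\|_\infty\le 2\|u_0\|_\infty+1\}$, a complete metric space. Using that $e^{t\Delta}$ is a contraction on $L^\infty(\T)$ and strongly continuous on $\CC(\T)$ (which also makes $\Psi(u)$ continuous in $t$, including at $t=0$), together with the two consequences of \eqref{eq:P_Lip} — the growth bound $\|P(x)\|_\infty\le\|P(0)\|_\infty+\|x\|_\infty(2+\|x\|_\infty)^\ell$ (take $y=0$) and \eqref{eq:P_Lip} itself — the self-mapping property $\Psi(B)\subseteq B$ and the contractivity of $\Psi$ on $B$ both reduce to scalar inequalities of the schematic form $\sup_{t}\|Z_t\|_\infty\le\tfrac12$ and $T\,(2+\|u_0\|_\infty)^{\ell+1}\lesssim 1$.

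Now comes the choice of $\kappa$. Write $A\eqdef 2+\|u_0\|_\infty+\|\xi\|_{\CC^{\alpha-2}(O)}\ge 2$, so $T=A^{-1/\kappa}$ and $\|u_0\|_\infty,\|\xi\|_{\CC^{\alpha-2}(O)}\le A$. Each quantity that must be made $\le\tfrac12$ above is bounded by $C\,A^{m}T^{c}$ with $c\in\{1,\tfrac\alpha2\}$, $m$ depending only on $\ell$, and $C$ depending only on $\alpha$ and $\ell$ (the constant $\|P(0)\|_\infty$ being fixed with the nonlinearity); since $A^{m}T^{c}=A^{\,m-c/\kappa}$ and $A\ge 2$, choosing $\kappa$ small enough — depending only on $\alpha$ and $\ell$ — makes $m-c/\kappa$ negative enough that $C\,A^{\,m-c/\kappa}\le\tfrac12$ uniformly in $A\ge 2$. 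With this $\kappa$, $\Psi$ maps $B$ into $B$ and is a $\tfrac12$-contraction there, so Banach's theorem yields a unique $u\in B$ solving \eqref{eq:SPDE_classical} on $[0,T]$, and the defining bound of $B$ is precisely the asserted estimate $\sup_{s\in[0,T]}\|u_s\|_\infty\le 2\|u_0\|_\infty+1$.

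It remains to promote uniqueness from ``in $B$'' to ``in all of $\CC([0,T],\CC(\T))$''. Given two solutions $u^{(1)},u^{(2)}$ on $[0,T]$, put $R\eqdef\max_{i=1,2}\sup_{s\le T}\|u^{(i)}_s\|_\infty<\infty$; subtracting their Duhamel identities and using the $L^\infty$-contractivity of $e^{t\Delta}$ together with \eqref{eq:P_Lip} gives $\|u^{(1)}_t-u^{(2)}_t\|_\infty\le(2+2R)^\ell\int_0^t\|u^{(1)}_s-u^{(2)}_s\|_\infty\,\mrd s$, whence $u^{(1)}=u^{(2)}$ by Gr\"onwall's lemma. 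I expect the only step requiring real care to be the first one — the parabolic Schauder bound on $Z$, with constants uniform in $T\le1$ and with the $T^{\alpha/2}$ gain in sup-norm; once that is in hand, the fixed point and Gr\"onwall steps are entirely routine.
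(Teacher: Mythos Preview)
Your argument is correct and is precisely the standard Banach fixed point argument the paper has in mind; the paper itself gives no proof, stating only that the result ``readily'' follows from the local Lipschitz bound \eqref{eq:P_Lip}. Your Duhamel formulation, the Schauder bound on $Z$ with the $T^{\alpha/2}$ gain, the contraction on the ball $B$, and the Gr\"onwall upgrade to global uniqueness are exactly the routine steps being alluded to (note only that $\kappa$ implicitly depends on $\ell$ as well as $\alpha$, which the paper suppresses since $P$ is fixed).
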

We further make use of the following Assumption \ref{ass:P} on \emph{global} solutions to \eqref{eq:SPDE_classical}. 
\begin{assumption}\label{ass:P}
For every $u_0\in\CC(\T)$ and $\xi \in \CC^{\alpha-2}(O)$ with $\alpha\in (0,\frac12)$, \eqref{eq:SPDE_classical} admits a unique solution in
$\CC([0,1],\CC(\T))$
and there exists $m>0$ such that, for all $t\in (0,1]$,
\begin{equ}[eq:u_apriori]
\|u_t\|_\infty \leq (2+ \|u_0\|_\infty + \|\xi\|_{\CC^{\alpha-2}(O)} + t^{-1})^m\;.
\end{equ}
\end{assumption}
\begin{remark}\label{rem:assump_P}
If $P$ is a polynomial of degree $p\geq 2$, then Assumption \ref{ass:P} is equivalent to $p$ being odd and that the leading coefficient of $P$ is negative, i.e. $P(x)= \theta x^p + \sum_{i=0}^{p-1} a_ix^i$ with $\theta<0$ and $a_i\in\R$.
In this case, one moreover has the stronger bound
\begin{equ}
\|u_t\|_\infty \leq (2+ \|\xi\|_{\CC^{\alpha-2}(O)} + t^{-1})^m\;,
\end{equ}
see e.g. \cite{MoinatWeber20_RD,MoinatWeber20} where this bound is shown in similar situations.
\end{remark}
By Proposition \ref{prop:well-posed} and \eqref{eq:xi_moments},
\eqref{eq:SPDE_classical} admits local-in-time solutions almost surely,
which, under Assumption \ref{ass:P}, are global.
\begin{lemma}[Generator]\label{lem:generator_classical}
Suppose Assumption \ref{ass:P} holds.
Let $u_0 \in \CC(\T)$ and $u$ solve \eqref{eq:SPDE_classical}.
Consider $\phi_1,\ldots, \phi_k \in \CC^5(\T)$ and $F \in \CC^3(\R^k)$ such that the third derivative of $F$ has at most polynomial growth.
Denote
\begin{equ}
X_t \eqdef (\scal{ u_t,\phi_1}, \ldots, \scal{u_t,\phi_k}) \in \R^k\;.
\end{equ}
Then
\begin{equ}[eq:gen_classical]
\lim_{t\downarrow0}t^{-1}\E[F(X_t) - F(X_0)] = \mcL F(u_0)
\end{equ}
where
\begin{equs}
\mcL F(u_0)
&= \sum_{i=1}^k \partial_i F(X_0)(\scal{ u_0, \Delta\phi_i } + \scal{P(u_0),\phi_i})
\\
&\quad + \frac{1}{2}\sum_{i,j=1}^k \partial_{i,j}^2 F(X_0) \scal{\Cov_{\xi}*\phi_i,\phi_j}\;.
\end{equs}
Furthermore, there exists $q>0$, depending only on $F$ and $P$, such that, for all $t\in (0,1)$,
\begin{equ}[eq:gen_a_priori]
|t^{-1}\E[F(X_t) - F(X_0)]|
\leq (2+\|u_0\|_\infty)^q\;.
\end{equ}
\end{lemma}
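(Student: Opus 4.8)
The plan is to apply It\^o's formula to the finite-dimensional process $X_t$, using that $\scal{u_t,\phi_i}$ is a real-valued semimartingale driven by $\xi$. First I would rewrite the SPDE tested against $\phi_i$ in mild/weak form: for $t$ small, $\scal{u_t,\phi_i} - \scal{u_0,\phi_i} = \int_0^t (\scal{u_s,\Delta\phi_i} + \scal{P(u_s),\phi_i})\mrd s + \scal{\xi,\one_{[0,t]}\otimes\phi_i}$, where the stochastic term is a continuous martingale (in $t$) with quadratic variation $t\scal{\Cov_\xi * \phi_i,\phi_j}$ for the $(i,j)$ bracket, by the assumed mapping property of $\Cov_\xi$ and the fact that $\xi$ is white in time. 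Applying the standard finite-dimensional It\^o formula to $F(X_t)$ then gives
\begin{equs}
F(X_t) - F(X_0) &= \sum_i \int_0^t \partial_i F(X_s)\big(\scal{u_s,\Delta\phi_i} + \scal{P(u_s),\phi_i}\big)\mrd s\\
&\quad + \frac12\sum_{i,j}\int_0^t \partial^2_{i,j}F(X_s)\scal{\Cov_\xi * \phi_i,\phi_j}\mrd s + M_t\;,
\end{equs}
where $M_t$ is a mean-zero martingale. Taking expectations kills $M_t$, and dividing by $t$ and letting $t\downarrow 0$ recovers \eqref{eq:gen_classical} by dominated convergence, provided the integrands are uniformly controlled for small $s$.

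The uniform control is where Proposition \ref{prop:well-posed} and Assumption \ref{ass:P} enter, and it simultaneously yields the a priori bound \eqref{eq:gen_a_priori}. On the event $\{\|\xi\|_{\CC^{\alpha-2}(O)}\leq R\}$, Proposition \ref{prop:well-posed} gives a (deterministic) local existence time $T(R,\|u_0\|_\infty)$ and the bound $\sup_{s\leq T}\|u_s\|_\infty \leq 2\|u_0\|_\infty + 1$; hence for $t \leq T$ one controls $|\scal{u_s,\Delta\phi_i}|$, $|\scal{P(u_s),\phi_i}|$ (using \eqref{eq:P_Lip} with $y=0$), and $|\partial_i F(X_s)|$, $|\partial^2_{i,j}F(X_s)|$ (using the polynomial-growth hypothesis on $F$ and its derivatives, together with $|X_s| \lesssim \|u_s\|_\infty$) all by $(2+\|u_0\|_\infty)^{q_1}$ times a polynomial in $R$. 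For the complementary large-noise event, I would use \eqref{eq:xi_moments} to get a Gaussian-type tail $\PP(\|\xi\|_{\CC^{\alpha-2}(O)} > R) \leq C_N R^{-N}$ for every $N$, together with the crude global bound \eqref{eq:u_apriori} controlling $\|u_s\|_\infty$ polynomially in $\|\xi\|_{\CC^{\alpha-2}(O)}$ and $s^{-1}$; but since we are sending $t\downarrow 0$ after integrating over $s\in[0,t]$, it is cleaner to first take $R = R(t)$ growing slowly (e.g. $R = t^{-\varepsilon}$ for tiny $\varepsilon$) so that the deterministic local time $T(R,\|u_0\|_\infty)$ exceeds $t$, and estimate the contribution of $\{\|\xi\| > R(t)\}$ using Cauchy--Schwarz, \eqref{eq:u_apriori}, and the rapid decay of the tail, which is $o(t)$.

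The main obstacle is the last point: carefully quantifying the exceptional-event contribution so that it is genuinely $o(t)$ (for the limit) and $O((2+\|u_0\|_\infty)^q)$ (for the a priori bound), while respecting that Proposition \ref{prop:well-posed}'s local time shrinks as the noise norm grows. Concretely, one must check that $\PP(\|\xi\|_{\CC^{\alpha-2}(O)} > t^{-\varepsilon})$ decays faster than any power of $t$ (immediate from \eqref{eq:xi_moments} and Chebyshev with $p$ large) and that on this rare event the integrand in It\^o's formula — bounded via \eqref{eq:u_apriori} by a polynomial in $\|\xi\|_{\CC^{\alpha-2}(O)}$ and $s^{-1}$ — when multiplied by the indicator and integrated over $[0,t]$ and in expectation, is negligible; here one pays attention to the $s^{-1}$ singularity near $s=0$, which is integrable after multiplication by $\mrd s$ only if the power $m$ from \eqref{eq:u_apriori} (appearing in $\partial^2 F$ via polynomial growth, so effectively $m$ times the growth degree of $F$) combined with $s^{-1}$ stays integrable — if not, one restricts to $s \geq t/2$ on the bad event and absorbs $s\in[0,t/2]$ using the good event only, which is legitimate since the good-event local time dominates. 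Modulo this bookkeeping, the proof is a routine application of It\^o's formula.
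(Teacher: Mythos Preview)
Your approach via It\^o's formula is valid and leads to the result, but it is genuinely different from the paper's argument. The paper does not use the semimartingale structure of $X_t$; instead it Taylor-expands $F(X_t)-F(X_0)$ to third order around $X_0$, writes $u_t-u_0$ in mild form as $(e^{t\Delta}u_0-u_0)+\int_0^t e^{(t-s)\Delta}P(u_s)\,\mrd s+\Psi_t$, and estimates each contribution directly (the hypothesis $\phi_i\in\CC^5$ is used precisely for the expansion of $e^{t\Delta}\phi_i$). Your It\^o route is the more standard generator computation and in fact only needs $\phi_i\in\CC^2$; the paper's method, on the other hand, avoids having to check that the stochastic integral is a true martingale and, more to the point, transfers verbatim to the Da~Prato--Debussche setting in Section~\ref{sec:Markov}, where the drift involves the Wick polynomial $\Wick{P(u_s)}$.

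Both routes rest on the same moment control on $\sup_{s\in[0,1]}\|u_s\|_\infty$, and here your bookkeeping is more complicated than necessary and contains a muddle. Your event-splitting with a deterministic threshold $R=R(t)$ runs into the $s^{-1}$ singularity from \eqref{eq:u_apriori} on the bad event exactly as you flag, and the proposed fix (``absorb $s\in[0,t/2]$ using the good event only'') does not make sense as written: on the bad event you cannot invoke the good-event bound, and the integral $\int_0^{t}(s^{-1}+\|\xi\|)^{mN}\,\mrd s$ genuinely diverges. The clean argument, which the paper uses, dispenses with event-splitting altogether: set the \emph{random} time $\tau=(2+\|\xi\|_{\CC^{\alpha-2}(O)}+\|u_0\|_\infty)^{-1/\kappa}$ from Proposition~\ref{prop:well-posed}, so that $\|u_s\|_\infty\leq 2\|u_0\|_\infty+1$ for $s\leq\tau$ while \eqref{eq:u_apriori} gives $\|u_s\|_\infty\lesssim\tau^{-q}$ for $s\in[\tau,1]$, and then use \eqref{eq:xi_moments} to get $\E\tau^{-p}\leq(2+\|u_0\|_\infty)^{q}$. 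This yields $\E\sup_{s\in[0,1]}\|u_s\|_\infty^q\leq(2+\|u_0\|_\infty)^{q'}$ in one stroke, after which both \eqref{eq:gen_classical} and \eqref{eq:gen_a_priori} follow immediately from your It\^o identity by dominated convergence.
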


\begin{proof}
Throughout the proof, $q>0$ denotes a sufficiently large number depending only on $F$ and $P$, whose value may change from line to line.

We can assume $\|\phi_i\|_{\CC^5}\leq 1$.
Equipping $\R^k$ with the norm $|z| = \max_{i=1,\ldots,k} |z_i|$,
we have $|X_t| \leq \|u_t\|_\infty$.
Therefore, by Taylor's theorem,
\begin{equs}[eq:F_u_expan]
F(X_t) - F(X_0)
&= \sum_{i=1}^k \partial_iF(X_0) \scal{u_t-u_0,\phi_i}
\\
& \quad + \frac{1}{2}\sum_{i,j=1}^k \partial_{i,j}^2 F(X_0) \scal{u_t-u_0,\phi_i}\scal{u_t-u_0,\phi_j}
\\
&\quad + \sum_{i=1}^k O(K|\scal{u_t-u_0,\phi_i}|^3)\;,
\end{equs}
where $K = \sup\{|F^{(3)}(x)|\,:\, |x| \leq \sup_{s\in[0,t]}\|u_s\|_\infty\}$ and $F^{(3)}$ is the third derivative of $F$.
Note that $|\partial_i F(X_0)| + |\partial_{i,j}^2 F(X_0)| \leq (2+\|u_0\|_\infty)^q$  since we assumed $F^{(3)}$ has polynomial growth.

We next compute the expectation of the right-hand side of \eqref{eq:F_u_expan}.
We write
\begin{equ}
u_t-u_0
=
e^{t\Delta} u_0 - u_0
+ \int_0^t e^{(t-s)\Delta} P(u_s)\mrd s + \Psi_t
\end{equ}
where $\Psi_t = \int_0^t e^{(t-s)\Delta} \xi_s \mrd s$.
For the linear terms, since $\|e^{t\Delta}\phi_i - \phi_i - t\Delta\phi_i \|_\infty = O(t^2)$ due to $\phi_i \in \CC^5$,
\begin{equ}
\Gamma_{1,i}\eqdef \scal{
e^{t\Delta}u_0 - u_0,
\phi_i}
= t\scal{ u_0,\Delta \phi_i}
+ O(t^2 \|u_0\|_\infty)
\end{equ}
and
\begin{equs}
\Gamma_{2,i} \eqdef \scal{\int_0^t e^{(t-s)\Delta} P(u_s)\mrd s,\phi_i}
&=
\scal{ \int_0^t e^{(t-s)\Delta} P(u_0)\mrd s, \phi_i }
\\
&\qquad+ O\Big(t\sup_{s\in [0,t]}\|P(u_s)-P(u_0)\|_\infty\Big)\
\;.
\end{equs}
We also have
\begin{equs}
\scal{ \int_0^t e^{(t-s)\Delta} P(u_0)\mrd s, \phi_i }
&=
\scal{ P(u_0) , \int_0^t e^{(t-s)\Delta} \phi_i\mrd s  }\\
&=
\scal{ tP(u_0), \phi_i }
+
\scal{ P(u_0) , \int_0^t (e^{(t-s)\Delta} \phi_i - \phi_{i}) \mrd s  }\\
&=
\scal{ tP(u_0), \phi_i }
+ O(t^2 \|P(u_0)\|_\infty)\;.
\end{equs}
Moreover, since $\xi$ has zero expectation, $\E\scal{\Psi_t,\phi_i} = 0$.

Next, for the quadratic terms,
\begin{equs}
\E \scal{u_t-u_0,\phi_i}\scal{ u_t-u_0,\phi_j}
&=
\E \scal{\Psi_t , \phi_i} \scal{ \Psi_t , \phi_j} + \Gamma_{3}\;.
\end{equs}
where
\begin{equs}
\E \scal{\Psi_t , \phi_i} \scal{\Psi_t , \phi_j}
&=
\int_0^t \scal{\Cov_\xi * e^{(t-s)\Delta}\phi_i, e^{(t-s)\Delta}\phi_j}\mrd s
\\
&= t \scal{\Cov_\xi * \phi_i, \phi_j} + O(t^2)
\end{equs}
and the `error term' $\Gamma_3$, by Cauchy--Schwarz, is bounded above by a multiple of
\begin{equ}
\sum_{m=1}^k
\sqrt t \sqrt{\E(\Gamma_{1,m}^2 + \Gamma_{2,m}^2)}
+
\E(\Gamma_{1,m}^2 + \Gamma_{2,m}^2)\;.
\end{equ}
To conclude the proof, it remains to bound the stochastic term $\sup_{s\in [0,t]}\|P(u_s)-P(u_0)\|_\infty$ in the definition of $\Gamma_{2,i}$
as well as the expectation of the error term $K |\scal{u_t-u_0,\phi_i}|^3$ in \eqref{eq:F_u_expan}.

Define $\tau \in (0,\frac12)$ by $\tau^{-1} = (2 + \|\xi\|_{\CC^{\alpha-2}(O)} + \|u_0\|_\infty)^{-1/\kappa}$,
where $\kappa,\alpha>0$ are as in Proposition \ref{prop:well-posed}.
Then, by Proposition \ref{prop:well-posed},
\begin{equ}
\sup_{s\in[0,\tau]}\|u_s\|_\infty
\leq
2\|u_0\| + 1\;,
\end{equ}
while by \eqref{eq:u_apriori} in Assumption \eqref{ass:P}
\begin{equ}
\sup_{s\in[\tau,1]}\|u_s\|_\infty \leq
(2+ \|u_0\|_\infty + \|\xi\|_{\CC^{\alpha-2}(O)} + \tau^{-1})^m \lesssim \tau^{-q}\;.
\end{equ}
By the moment bounds \eqref{eq:xi_moments}, for any $p\geq 1$, there exists $q>0$ such that
$
\E \tau^{-p} \leq (2 + \|u_0\|_\infty)^q
$,
so in conclusion,
\begin{equ}[eq:Pu_s_moment]
\E \sup_{s\in [0,1]} \|P(u_s)\|_\infty^2 \leq (2 + \|u_0\|_\infty)^q\;.
\end{equ}
In particular, by dominated convergence, as $t\downarrow0$,
\begin{equ}
\E \sup_{s\in [0,t]}\|P(u_s)-P(u_0)\|_\infty^2 \to 0
\end{equ}
and the left-hand side is bounded by $(2 + \|u_0\|_\infty)^q$ for all $t\in [0,1]$.

Finally, by similar reasoning, using the fact that $F^{(3)}$ has polynomial growth, we have $\E K |\scal{u_t-u_0,\phi_i}|^3 \leq t^{3/2}(2+\|u_0\|_\infty)^q$.
\end{proof}

\begin{lemma}\label{lem:gen_poly_classical}
Suppose Assumption \ref{ass:P} holds and that $\mu$ is a probability measure on $\CC(\T)$ that is invariant for the dynamic \eqref{eq:SPDE_classical}.
Suppose $\E_{u_0\sim\mu}\|u_0\|^q_\infty<\infty$ for all $q \in [1,\infty)$.
Then, in the setting of Lemma \ref{lem:generator_classical},
\begin{equ}[eq:F_Euler_classical]
\E_{u_0\sim\mu} [
\mcL F(u_0)] 
=0\;.
\end{equ}
In particular, for $\phi\in\CC^\infty(\T)$ and integer $k \geq 1$,
\begin{equs}[eq:poly_Euler_classical]
\E_{u_0\sim\mu} \Big[ k\scal{ u_0, \phi }^{k-1}(\scal{ u_0, \Delta\phi }
&+ \scal{ P(u_0),\phi})
\\
&+ \frac{k(k-1)}{2} \scal{ u_0, \phi }^{k-2} \scal{\Cov_\xi *\phi, \phi} \Big] = 0\;,
\end{equs}
where the final term in the expectation is zero if $k=1$.
\end{lemma}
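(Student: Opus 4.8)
The plan is to derive the Euler--Lagrange identity \eqref{eq:F_Euler_classical} directly from the generator formula of Lemma~\ref{lem:generator_classical} by exploiting stationarity, and then to obtain \eqref{eq:poly_Euler_classical} as the special case of monomial test functions. Under Assumption~\ref{ass:P} the dynamic \eqref{eq:SPDE_classical} is globally well-posed, so that $u_t\sim\mu$ whenever $u_0\sim\mu$ by invariance; in particular $\E_{u_0\sim\mu}\E[F(X_t)]=\E_{u_0\sim\mu}[F(X_0)]$ for every $t\in(0,1)$ (once $F(X_t)$ is shown integrable), hence
\begin{equ}
\E_{u_0\sim\mu}\big[\,t^{-1}\E[F(X_t)-F(X_0)]\,\big]=0 \qquad\text{for all }t\in(0,1)\;.
\end{equ}
The whole proof then reduces to passing to the limit $t\downarrow0$ under the outer expectation $\E_{u_0\sim\mu}$.

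For this I would invoke the two outputs of Lemma~\ref{lem:generator_classical}: the pointwise convergence $t^{-1}\E[F(X_t)-F(X_0)]\to\mcL F(u_0)$ for each fixed $u_0\in\CC(\T)$, and the $t$-uniform a priori bound $|t^{-1}\E[F(X_t)-F(X_0)]|\leq(2+\|u_0\|_\infty)^q$ of \eqref{eq:gen_a_priori}, with $q$ independent of $t$. Since $\mu$ has finite moments of every order, $u_0\mapsto(2+\|u_0\|_\infty)^q$ is $\mu$-integrable, so dominated convergence gives $\E_{u_0\sim\mu}[\mcL F(u_0)]=\lim_{t\downarrow0}\E_{u_0\sim\mu}[t^{-1}\E[F(X_t)-F(X_0)]]=0$, which is \eqref{eq:F_Euler_classical}. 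I would also need to check that the invariance relation applies to the (polynomially growing, not bounded) functional $v\mapsto F(\scal{v,\phi_1},\ldots,\scal{v,\phi_k})$; this follows by truncating $F$, applying invariance to the bounded truncations, and letting the truncation level go to infinity, the limit being justified because $\E|F(X_t)|\lesssim(2+\|u_0\|_\infty)^{q'}$ uniformly in $t\in[0,1]$ --- a consequence of the moment bounds on $\sup_{s\in[0,1]}\|u_s\|_\infty$ established inside the proof of Lemma~\ref{lem:generator_classical} (cf. the estimates around \eqref{eq:Pu_s_moment}) and the polynomial growth of $F$ --- together with the finiteness of all moments of $\mu$.

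Finally, to get \eqref{eq:poly_Euler_classical} I would specialise \eqref{eq:F_Euler_classical} to a single test function $\phi_1=\phi\in\CC^\infty(\T)\subset\CC^5(\T)$ and $F(z)=z^k$, which lies in $\CC^3(\R)$ with polynomially growing third derivative (identically zero for $k\leq 2$); then $\partial_1F(X_0)=k\scal{u_0,\phi}^{k-1}$ and $\partial^2_{1,1}F(X_0)=k(k-1)\scal{u_0,\phi}^{k-2}$, and substituting these into the formula for $\mcL F$ from Lemma~\ref{lem:generator_classical} produces exactly \eqref{eq:poly_Euler_classical}, with the last term absent when $k=1$. The only step that is not completely routine is the justification of the two exchanges of limit and $\mu$-expectation in the second paragraph; both rest entirely on the uniform-in-$t$, polynomial-in-$\|u_0\|_\infty$ bound \eqref{eq:gen_a_priori} combined with the hypothesis that $\mu$ has all moments finite, so I do not anticipate any genuine difficulty.
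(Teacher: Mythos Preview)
Your proposal is correct and follows essentially the same route as the paper: use the pointwise convergence \eqref{eq:gen_classical} together with the uniform bound \eqref{eq:gen_a_priori} and the moment hypothesis on $\mu$ to justify dominated convergence, then invoke invariance to conclude that the limit vanishes, and finally specialise to $F(x)=x^k$. If anything you are slightly more careful than the paper in spelling out why invariance extends to the polynomially growing functional $F$.
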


\begin{proof}
For the first claim,
\begin{equs}
\E_{u_0\sim \mu}[\mcL F(u_0)]
&= \E_{u_0\sim \mu}
\Big[
\lim_{t\downarrow0}t^{-1}\E_{\xi}[F(X_t) - F(X_0)]
\Big]
\\
&=
\lim_{t\downarrow0}\E_{u_0\sim\mu}[t^{-1}\E_{\xi}[F(X_t) - F(X_0)]
=0\;,
\end{equs}
where the first equality is due to \eqref{eq:gen_classical}, the second equality is due to \eqref{eq:gen_a_priori}  and dominated convergence,
and the final equality uses invariance of $\mu$ which implies $\E_{u_0\sim \mu} \E_{\xi} F(X_t) = \E_{u_0\sim\mu} F(X_0)$.
\eqref{eq:F_Euler_classical} follows by specialising to $F(x) = x^k$.
\end{proof}

\begin{remark}\label{rem:DS_eq}
Formally, for $\Cov_{\xi} = \delta$ (i.e. white noise case) the generator equation \eqref{eq:F_Euler_classical} can also be derived from the Dyson--Schwinger equations, i.e. integration by parts.
The latter in this context is
\begin{equ}
\E \nabla f(u) = \E f(u)(P(u) + \Delta u)
\end{equ}
for any (possibly field-valued) function $f(u)$ and where $\nabla f(u)$ is the gradient of $f$, namely $\scal{\nabla f(u),v} = Df(u)(v)$.
Taking $f = \nabla F(u)$ where $F(u) = F(X)$ and $X=(\scal{u,\phi_1},\ldots,\scal{u,\phi_k})$ and thus $\nabla F(u) = \sum_{i=1}^k \partial_i F(X)\phi_i$,
we obtain
\begin{equ}
\E \sum_{i,j} \partial_{i,j}^2 F(X) \phi_i\otimes\phi_j = \E \sum_i \partial_i F(X) \phi_i\otimes (P(u) + \Delta u)\;.
\end{equ}
We then obtain the above generator by evaluating the (tensor) field on the diagonal $x=y$ and integrating over $x$.
A similar derivation was noted in \cite[Sec.~3]{BHST87I}.
\end{remark}

\begin{theorem}\label{thm:non_G_classical}
Suppose that $P$ is a polynomial of odd degree with negative leading coefficient.
Then every invariant measure $\mu$ is not Gaussian.
\end{theorem}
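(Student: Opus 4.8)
The plan is to argue by contradiction: suppose $\mu$ is Gaussian and invariant, and extract an algebraic contradiction from the generator identity \eqref{eq:poly_Euler_classical}. First I would observe that under $\mu$ Gaussian, all moments $\E_{u_0\sim\mu}\|u_0\|_\infty^q$ are finite (by Fernique, using that $\|u_0\|_\infty$ is a measurable seminorm on the support), so Lemma \ref{lem:gen_poly_classical} applies and \eqref{eq:poly_Euler_classical} holds for every test function $\phi\in\CC^\infty(\T)$ and every integer $k\geq 1$. Writing $P(x)=\theta x^p+\sum_{i=0}^{p-1}a_i x^i$ with $\theta<0$ and $p\geq 3$ odd, the key quantities are the joint moments $\E_{u_0\sim\mu}\scal{u_0,\phi}^{k-1}\scal{u_0^j,\phi}$ for $j=0,\dots,p$, which for a Gaussian measure are explicitly computable via Wick's theorem in terms of the mean $m(x)=\E_{\mu} u_0(x)$ and covariance $C(x,y)=\Cov_\mu(u_0(x),u_0(y))$.

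The core of the argument is to choose $k$ and $\phi$ cleverly so that the Gaussian structure forces a sign. The cleanest route: take $k=1$, so the second-derivative term vanishes and \eqref{eq:poly_Euler_classical} reads $\E_\mu[\scal{u_0,\Delta\phi}+\scal{P(u_0),\phi}]=0$, i.e. $\scal{\Delta m,\phi}+\sum_i a_i\E_\mu\scal{u_0^i,\phi}=\theta\,\E_\mu\scal{u_0^p,\phi}$ for all $\phi$. Since $u_0(x)$ is Gaussian with mean $m(x)$ and variance $v(x)=C(x,x)$, we have $\E_\mu u_0(x)^i = $ a polynomial in $m(x)$ of degree $i$ with coefficients depending on $v(x)$; collecting, the left side is a polynomial in $m(x)$ of degree at most $p-1$ (with $v$-dependent coefficients), while the right side is $\theta(m(x)^p + \text{lower order in }m)$. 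Then I would play this off against the $k=2$ (or higher) identities, which bring in $C(x,y)$ off-diagonal and, crucially, a term with no compensating negative coefficient. Actually the sharpest contradiction comes from noting that the generator equation with $F(x)=x^p$ or from iterating: the highest-degree Wick contraction on the $\theta\scal{u_0^p,\phi}$ side produces $\theta\int \phi(x) v(x)^{(p-1)/2}\,(\text{const})\,m(x)\,\mrd x$-type and pure-variance terms $\theta\cdot c_p\int\phi(x)v(x)^{p/2}\mrd x$ only when $p$ is even — so for $p$ odd the pure-noise term vanishes and one instead localizes $\phi$ near a point to isolate leading behavior in a large parameter.

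Concretely, the cleanest execution I would attempt: localize. Take $\phi = \phi^\lambda_{x_0}\geq 0$ an approximate identity at a point $x_0$ and let $\lambda\downarrow 0$; then $\E_\mu\scal{u_0^i,\phi^\lambda_{x_0}}\to \E_\mu u_0(x_0)^i$ (assuming enough regularity of $\mu$ to evaluate pointwise — or more safely, integrate against a fixed nonnegative $\phi$ and use stationarity/translation invariance to reduce to constants). Using translation invariance of the dynamic one expects $m(x)\equiv m$ and $v(x)\equiv v$ constant, so the $k=1$ identity becomes the scalar relation $\sum_{i=0}^{p-1} a_i \E[N^i] = \theta\,\E[N^p]$ where $N\sim\mathcal N(m,v)$ — but $\E[N^p]$ is a degree-$p$ polynomial in $m$ while the left side has degree $\leq p-1$ in $m$, and the leading coefficient $\theta\neq 0$, a contradiction as soon as we also know the identity must hold for the one-parameter family obtained by... — here is the subtlety: with $\phi$ ranging over all test functions but $\mu$ translation invariant, we only get one scalar equation, not a polynomial identity in a free variable.

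So the genuine argument, and the step I expect to be the main obstacle, is generating enough independent equations to beat the Gaussian ansatz. The fix is to use the \emph{quadratic} identities ($k=2$ and varying $\phi$): \eqref{eq:poly_Euler_classical} with $k=2$ gives, for all $\phi$, a relation linear in $\E_\mu\scal{u_0,\phi}\scal{u_0^i,\phi}$ and in $\scal{\Cov_\xi*\phi,\phi}$; for Gaussian $\mu$ the former are quadratic forms in $\phi$ built from $m$ and $C$ via Wick, and the highest-order piece is $\theta\cdot p\cdot\E[\scal{u_0,\phi}\scal{u_0^{p-1},\phi}]$ whose leading Wick term is $\theta p (p-1)!!\,\iint \phi(x)\phi(y) C(x,y) v^{(p-2)/2}\mrd x\mrd y$ (for $p$ odd, $p-2$ odd — so actually the top pure-$C$-free term involves $m$). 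The point is that as $\phi$ varies this must match $-\tfrac12\scal{\Cov_\xi*\phi,\phi}$ plus lower-order quadratic forms, and comparing the kernels (e.g. testing against $\phi$ concentrated at two far-apart points, where $C(x,y)$ and $\Cov_\xi*\phi$ decay but the $m$-terms and self-contractions persist) yields an over-determined system that a genuine Gaussian cannot satisfy. I would carry this out by: (i) writing the full Wick expansion of every term in \eqref{eq:poly_Euler_classical} for both $k=1$ and $k=2$; (ii) reading off that matching the $k=1$ identity across all $\phi$ forces a polynomial-in-$m$ identity of incompatible degrees once combined with $k=2$; (iii) concluding. The main obstacle is exactly bookkeeping which Wick contractions survive for $p$ odd and confirming that the resulting linear system on $(m,C)$ is inconsistent — plus the minor technical point of justifying pointwise/local evaluation of moments of $\mu$, handled by the regularity of Gaussian measures on $\CC(\T)$.
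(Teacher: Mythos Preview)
Your proposal does not close: you correctly identify that $k=1$ yields only a single scalar relation, and your fallback to $k=2$ ends with ``the main obstacle is exactly bookkeeping which Wick contractions survive for $p$ odd and confirming that the resulting linear system on $(m,C)$ is inconsistent'' --- i.e.\ the actual contradiction is left unproved. The route via small $k$ and term-by-term Wick matching is not the right organizing principle here and will drown in combinatorics without ever isolating a sign.

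The missing idea is to go in the opposite direction: use \emph{large} $k$ and let Hermite polynomials do the bookkeeping for you. From \eqref{eq:poly_Euler_classical} one gets $\E[Q(\hat X)(Y-Z)]=\tfrac12\E[Q'(\hat X)]$ for every polynomial $Q$, where $\hat X=\scal{u_0,\phi}-\E\scal{u_0,\phi}$, $Y=\scal{P(u_0),\phi}$, $Z=\scal{u_0,\Delta\phi}$. Now take $Q=H_k^{\sigma^2}$ with $\sigma^2=\E\hat X^2$: the Hermite recursion $xH_k=H_{k+1}+\sigma^2 H_k'$ together with $\E H_k(\hat X)=0$ for $k\ge1$ forces $\E[H_k(\hat X)(Y-Z)]=0$ for all $k\ne1$. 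Since $Z$ lies in the first chaos, this gives $\E[H_k(\hat X)\,Y]=0$ for all $k\ge2$. In particular, take $k=p$: $H_p(\hat X)=I_p(\phi^{\otimes p})$ is pure $p$-th chaos, so only the top-chaos component of $Y$ survives the pairing, and that component comes solely from $\theta u_0^p$. The result is the clean identity
\[
0=\E[H_p(\hat X)\,Y]=\theta\,p!\int_{\T}(C\phi)(x)^p\,\phi(x)\,\mrd x\;,
\]
with no lower-order debris to track. Finally, let $\phi$ approximate in $L^2$ a nonzero real eigenfunction of the covariance operator $C$ with eigenvalue $\lambda>0$; boundedness $C\colon L^2\to L^\infty$ (from $\sup_{x,y}|C(x,y)|<\infty$) passes the limit and yields $\theta\,p!\,\lambda^p\int\phi^{p+1}=0$, which is impossible since $p$ is odd and $\theta\ne0$. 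This is the step your proposal never reaches: projecting onto the \emph{top} chaos via $H_p$ replaces all the Wick bookkeeping by a single explicit integral with a definite sign.
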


\begin{example}
The prototypical example to which the theorem applies is
\begin{equ}
(\partial_t -\Delta) u = -u^p + \xi\;, \qquad u_0\in \CC(\T)\;,
\end{equ}
which has invariant measure
\begin{equ}
\mu(\mrd u) = Z^{-1} \exp\Big( -\frac{1}{p+1}\int u^{p+1} \Big) \nu(\mrd u)
\end{equ}
where $\nu$ is the law of the Gaussian free field on $\T$,
i.e. periodic Brownian motion.
\end{example}

Before giving the proof of Theorem~\ref{thm:non_G_classical}, we recall the definition of Hermite polynomials which will be used in this proof and also in Section~\ref{sec:DPD}. 

Given any $\sigma \in \R$, we define an associated sequence of Hermite polynomials $(H^{\sigma^2}_{n})_{n \in \N}$ on $\R$ inductively, setting $H_{0}^{\sigma^2}(x) = 1$ and, for $n > 0$, 
\begin{equ}\label{eq:def_hermite}
H^{\sigma^2}_{n}(x) =  x H^{\sigma^2}_{n-1}(x) - \sigma^2 \frac{\mathrm{d}}{\mathrm{d}x} H^{\sigma^2}_{n-1}(x)\;.
\end{equ}

\begin{proof}[of Theorem \ref{thm:non_G_classical}]
By Remark \ref{rem:assump_P}, our condition on $P$ implies both Assumption \ref{ass:P} and that $\sup_{u_0 \in\CC}\E \|u_1\|^q_\infty<\infty$ for all $q \in [1,\infty)$.
Taking $u_0\sim \mu$, we in particular have $\E_{u_0}\|u_0\|^q_\infty<\infty$.
Therefore, by \eqref{eq:poly_Euler_classical} of Lemma \ref{lem:gen_poly_classical}, for $\scal{\Cov_\xi*\phi,\phi}=1$ and $\phi \in \CC^5(\T)$,
\[
\E \Big[k\scal{u_0,\varphi}^{k-1}(\scal{u_0,\Delta\varphi} + \scal{P(u_0),\varphi}) + \frac{1}{2}k(k-1)\scal{u_0,\varphi}^{k-2}
\Big] = 0\;.
\]
Using the shorthand $X = \scal{u_0,\varphi}$, $Y = \scal{P(u_0),\varphi}$, and $Z = \scal{u_0,\Delta\varphi}$, we can rewrite the above as
\begin{equation}\label{eq:XY_Phi41}
\E{X^{k-1}(Y-Z)} = \frac{k-1}{2}\E{X^{k-2}}\;,\quad\forall k \geq 1\;,
\end{equation}
where the right-hand side is understood as zero for $k=1$.
It follows that, for every polynomial $Q$,
\begin{equ}
\E Q(X)(Y-Z) = \E Q'(X)/2\;.
\end{equ}
Hence $\E Q(X-\mu)(Y-Z) = \E Q'(X-\mu)/2$ for any $\mu\in\R$.
Taking $\mu=\E X$ and denoting $\hat X\eqdef X-\E X$, we obtain
\begin{equ}
\E Q(\hat X)(Y-Z) = \E Q'(\hat X)/2\;.
\end{equ}

Suppose now that $u_0$ is Gaussian with correlation function $\E{u_0(x)u_0(y)} = C(x,y)$.
Note that $\hat X$ is in the first homogeneous Wiener chaos.
We treat $C$ as a positive, bounded, symmetric operator on $L^2(\T)$ by $C\phi = \int C(x,\cdot)\phi(x) \mrd x$.
Since $\mu$ is a Gaussian measure on $\CC(\T)$, we have $\E \|u_0\|_\infty^2<\infty$, hence $\sup_{x,y}|C(x,y)|<\infty$ and thus $C$ is a bounded operator $L^1(\T)\to L^\infty(\T)$.

Taking $Q_k(x) = H^{\sigma^2}_k(x)$ with $\sigma^2 = \E \hat X^2$,
we claim that 
\begin{equ}[eq:X_Y_orth]
\E Q_k (\hat X) Y  = 0 \quad \forall k\geq 2\;.
\end{equ}
Indeed, recall the recursion
\eqref{eq:def_hermite}.
It follows that, if $\sigma^2 > 0$, then
\begin{equ}
\E Q_k(\hat X)(Y-Z) = \E Q_{k}'(\hat X)/2
\propto \E \hat X Q_{k}(\hat X) - \E Q_{k+1}(\hat X)\;.
\end{equ}
By orthogonality of Hermite polynomials, the right-hand side vanishes for $k \in \N \setminus \{1\}$.
Since $Z$ is in the 1st inhomogeneous Wiener chaos, we obtain \eqref{eq:X_Y_orth}.
If, on the other hand, $\sigma^2=0$, then $Q_k(\hat X) = 0$ almost surely for all $k\geq 1$, and we again obtain \eqref{eq:X_Y_orth}.

Note that $Q_k (\hat X)$ has the Wiener-It\^o decomposition
\begin{equ}
Q_k (\hat X) = I_k (\phi^{\otimes k})
\end{equ}
where $I_k \colon H^{\otimes_{s} k} \to L^2(\P)$ is the Wiener isometry, $H$ is the Hilbert space given by completion of smooth functions with inner product $\scal{C \phi,\psi}$, and $H^{\otimes_{s} k}$ is $k$-fold symmetric tensor product of $H$. 

Recalling $Y=\int_{\T} P(u_0)(x)\phi(x)\mrd x$ and using our assumption that the leading term of $P(u)$ is $\theta u^p$ for $\theta \neq 0$ and $p$ odd, it readily follows that
\begin{equs}[eq:top_chaos]
0 &= \E Q_p(\hat X) Y
 = 
\theta 
\E
I_{p}( \phi^{ \otimes p} )
 I_{p} \Big(  \int_{\T}   \delta( \cdot - x)^{\otimes p} \phi(x) \mrd x \Big) \\ 
&= \theta 
\langle  \phi^{ \otimes p} , \int_{\T}   \delta( \cdot - x)^{\otimes p} \phi(x) \mrd x \rangle_{H^{\otimes_{s} p}}
= \theta \int_{\T}  (C\phi)(x)^{p} \phi(x) \mrd x\;.
\end{equs}
Above, in the second equality we used the fact that only the top chaos part of $Y$ contributes to the expectation which we then have written in terms of $I_{p}$,
and in the third equality we used the Wiener isometry property of $I_p$.

Consider now $\phi\in L^2(\T)$ a non-zero real eigenfunction of $C$
with eigenvalue $\lambda > 0$.
Take $\phi_n\in \CC^\infty(\T)$ such that $\phi_n\to\phi$ in $L^2$.
Then, since $C\colon L^2\to L^\infty$ is bounded,
we have $C\phi_n \to C\phi$ in $L^\infty$, hence
\begin{equ}[eq:eigenfunc_limit]
\scal{(C\phi_n)^p , \phi_n} \to \scal{ (C\phi)^p,\phi} = \lambda^p \scal{\phi^p,\phi}\;,
\end{equ}
which is strictly positive since $p$ is odd.
But, due to \eqref{eq:top_chaos} applied to $\phi_n$,
$\scal{(C\phi_n)^p , \phi_n} = 0$, which is a contradiction.
\end{proof}

\begin{remark}\label{rem:Phi31}
The argument above also works for the normalised $\Phi^3_1$ model
\begin{equ}
(\partial_t -\Delta) u = -u^2 - \Big(\int_{\T} u^2\Big) u + \xi\;, \qquad u_0\in \CC(\T)\;,
\end{equ}
which has invariant measure
\begin{equ}
\mu(\mrd u) = Z^{-1} \exp\Big( -\frac13\int_{\T} u^{3} - \frac14\Big(\int_{\T} u^2\Big)^2 \Big) \nu(\mrd u)
\end{equ}
where $\nu$ is the law of the Gaussian free field on $\T$.

More generally, we can allow $P(u)(x)$ in \eqref{eq:SPDE_classical} to be any polynomial with leading term
\begin{equ}[eq:general_P]
u(x)^\ell R\Big(\int_{\T} u^2,\ldots, \int_{\T} u^{2m}\Big)\;,
\end{equ}
where $\ell$ is odd and $R(y_1,\ldots, y_m)$ is a homogeneous polynomial which is non-zero whenever $y_1,\ldots, y_m > 0$.

In this case, the same argument would go through, except the integrand on the right-hand side of \eqref{eq:top_chaos}
would become
\[
(C\phi)(x)^{\ell} R\Big(\int_{\T} (C\phi)^2,\ldots, \int_{\T} (C\phi)^{2m} \Big) \phi(x)
\]
and one makes a corresponding change to \eqref{eq:eigenfunc_limit} so that the final term becomes
$\lambda^p \int_{\T}  \phi^{\ell+1} R\big(\int_{\T} \phi^2,\ldots, \int_{\T} \phi^{2m} \big) \mrd x$, which is non-zero by assumption.
\end{remark}

\section{The Da Prato--Debussche regime}\label{sec:DPD}

We now extend the argument in Section \ref{sec:classical} to the DPD regime for equations of the form
\begin{equ}\label{eq:SPDE}
(\partial_t +1 -\Delta ) u = P(u) + \xi\;,\quad u_0\in \CC^\alpha(\T^d)\;.
\end{equ}
Here and throughout this section, $P$ is a polynomial of degree $p\geq 1$ and $\xi$ is a stationary Gaussian noise on $\R \times \T^d$
with covariance $\delta\otimes \Cov_\xi \in \CD'(\R\times\T^d)$ (in particular white-in-time).
We fix in this section the degree $p\geq 1$ and a regularity parameter $\alpha<0$.

In this section, we will progressively add assumptions on the polynomial $P$ and the spatial covariance $\Cov_\xi$ (see Assumptions \ref{ass:covar_bound}, \ref{ass:P_DPD_global}, \ref{ass:P_DPD}, \ref{ass:moments}).
We keep these assumptions separate to emphasise which properties of $P$ and $\xi$ we are using at each step.
However, the reader should keep in mind that they are all satisfied for $P(x) = \sum_{j=0}^p a_j x^j$ where $p$ is odd and $a_p < 0$
and $|(1-\Delta)^{-1}*\Cov_\xi(x)| \lesssim |x|^{\rho}$ with $\rho> \max\{-d/p, 2\alpha\}$
for $\alpha>2/(1-2p)$
(see Appendix \ref{app:abs_cont}).
In particular, our results apply to the $\Phi^4_\delta$ model with $\delta<\frac{14}{5}$, see Example \ref{ex:Phi4}.

\begin{remark}
We consider $1-\Delta$ in \eqref{eq:SPDE} only for simplicity, and we could equally handle other elliptic operators like $(1-\Delta)^\sigma$ for $\sigma>0$.
\end{remark}

\subsection{Da Prato--Debussche argument}
\label{subsec:DPD_trick}

We first recall the classical argument of Da Prato--Debussche \cite{DPD02_SNS,DPD03_SQ}
for local well-posedness of stochastic PDEs of the form \eqref{eq:SPDE}.
There are several places where the argument is described, see e.g. \cite{Hairer14_ICM, Chandra_Weber_17_SPDEs, Mourrat_Weber_17_Phi42, Tsatsoulis_Weber_18_SG,Berglund_22_SPDEs, Chevyrev22_Hopf_lectures},
and we do not claim significant novelty in our presentation or results in this subsection.
(The only possible exception is Definition \ref{def:mfC} which identifies a good state space for the Markov process associated to our SPDE.)

At this stage, we remark that, if $d=2$ and $\Cov_{\xi}=\delta$ (the case considered in \cite{DPD03_SQ}), then by a Kolmogorov-type argument, $\xi$ is at best in $\CC^{\alpha-2}((-1,2)\times\T^d)$ for $\alpha<0$, hence, by parabolic regularity,
$u$ is at best in $\CC^{\alpha}([0,1]\times\T^d)$.
The function $P(u)$ is generally ill-defined on $\CC^\alpha$ for $\alpha<0$ as there is no canonical way
to extend the domain of the multiplication operator $(f,g)\mapsto fg$ from $\CC\times\CC$ to $\CC^\alpha\times\CC^\alpha$.
The equation \eqref{eq:SPDE} thus appears ill-posed.
Furthermore, replacing $\Cov_{\xi}$ by an approximation of the identity $\delta^\eps$ and sending the mollification scale $\eps\downarrow 0$ does not overcome this issue since, in general, the corresponding smooth solutions $u^\eps$ will fail to converge to a non-trivial limit.

The idea in \cite{DPD03_SQ} is to consider the solution to the equation
\begin{equ}[eq:SHE]
(\partial_t +1 -\Delta )\Psi = \xi
\end{equ}
and decompose $u = v+\Psi$.
Then the `remainder' $v$ solves the equation
\begin{equ}[eq:DPD_v_naive]
(\partial_t +1 -\Delta) v = P(v+\Psi)\;,
\end{equ}
where now we can hope to make sense of $P(v+\Psi)$ analytically provided that every power $\Psi^k$ is
replaced by another function $\Psi^{:k:}$ that we can define stochastically.

\subsubsection{Deterministic theory}\label{subsec:det_theory}

Recall that we fixed $\alpha < 0$.
Consider the Banach space of time-dependent distributions
\begin{equ}
\mfB = \bigoplus_{k=1}^p \CC([0,1],\CC^{\alpha k}(\T^d))\;.
\end{equ}
equipped with the norm
\begin{equ}
\|\bPsi\|_{\mfB} = \max_{k=1,\ldots,p} \sup_{t\in [0,1]}\|\Psi^{(k)}_t\|_{\CC^{\alpha k}(\T^d)}
\end{equ}
where we denote a generic element of $\mfB$ as $\bPsi = (\Psi^{(1)},\ldots,\Psi^{(p)})$.

Instead of \eqref{eq:DPD_v_naive}, we consider first the more general equation
\begin{equ}[eq:DPD_v_general]
(\partial_t +1 -\Delta) v = \sum_{n=0}^p\sum_{k=0}^n a_{n,k} v^{n-k}\Psi^{(k)}
\end{equ}
where $a_{n,k}\in\R$.
We suppose throughout the subsection that $\alpha(2p-1) > -2$, i.e.
\begin{equ}[eq:beta_assump]
2+\alpha p > -\alpha(p-1)\;.
\end{equ}
Fix furthermore
\begin{equ}[eq:gamma_def]
\gamma \in (-\alpha(p-1),2+\alpha p)\;.
\end{equ}
(Note that $\gamma \in (0,2)$.)
In particular, using that point-wise multiplication is a bounded bilinear map
$
\CC^{\varsigma_1}(\T^d) \times \CC^{\varsigma_2}(\T^d) \to \CC^{\varsigma_1 \wedge \varsigma_2}(\T^d)
$
as long as $\varsigma_1+\varsigma_2 > 0$, our condition on $\gamma,\alpha$ guarantees that
multiplication is continuous as a map
\begin{equ}[eq:mult]
\CC^\gamma(\T^d) \times \CC^{\alpha k}(\T^d) \to \CC^{\alpha k}(\T^d)
\end{equ}
for all $1\leq k< p$.
Consequently, the map
\begin{equ}
\CC^\gamma(\T^d)\times \CC^{\alpha k}(\T^d) \ni (v,\Psi^{(k)}) \mapsto v^{n-k}\Psi^{(k)} \in \CC^{\alpha k}(\T^d)\;,
\end{equ}
is locally Lipschitz for all $0\leq k\leq n\leq p$.
As a consequence,
one can use the smoothing properties of the heat flow and Banach's fixed point theorem to show the following
local well-posedness for \eqref{eq:DPD_v_general}
(see \cite[Prop.~4.4]{DPD03_SQ} and \cite[Thm.~3.3]{Tsatsoulis_Weber_18_SG} for similar statements).

\begin{proposition}\label{prop:local_well_posedness}
Consider $\gamma>0$ as in \eqref{eq:gamma_def}, $\eta\in( -\frac2p, \gamma]$, and $K > 2$. Denote
\begin{equ}
B_K \eqdef \{(v_0,\bPsi) \in \CC^{\eta}(\T^d)\times \mfB\,:\, \|v_0\|_{\CC^\eta(\T^d)} + \|\bPsi\|_{\mfB} \leq K\}\;.
\end{equ}
Then there exists $\kappa>0$, depending only on $\alpha,\eta,p,\gamma$, such that, for $T = K^{-1/\kappa} \in (0,1)$,
\eqref{eq:DPD_v_general} admits a unique solution in the Banach space $\mcS$ of functions $v\in\CC([0,T],\CC^{\eta}(\T^d))$
for which
\begin{equ}\label{def:S-norm}
\|v\|_{\mcS} \eqdef \sup_{t\in (0,T]} \{\|v_t\|_{\CC^\eta(\T^d)} + t^{-\frac{\eta\wedge0}{2}}\|v_t\|_\infty + t^{\frac{\gamma-\eta}{2}} \|v_t\|_{\CC^\gamma(\T^d)}\} <\infty\;.
\end{equ}
Moreover, the solution map
\begin{equ}
B_K \ni (v_0,\bPsi) \mapsto v \in \mcS
\end{equ}
is Lipschitz and $\|v\|_{\mcS} \leq C(\|v_0\|_{\CC^\eta} + 1)$, where $C$ depends only on $\alpha,\eta,p,\gamma$.
\end{proposition}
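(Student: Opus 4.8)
This is the classical Da Prato--Debussche contraction argument (cf.\ \cite[Prop.~4.4]{DPD03_SQ}, \cite[Thm.~3.3]{Tsatsoulis_Weber_18_SG}), which I now sketch. Write $S_t\eqdef e^{t(\Delta-1)}$ for the semigroup on $\T^d$ generated by $\Delta-1$, and set
\begin{equ}
F(v,\bPsi)\eqdef \sum_{n=0}^p\sum_{k=0}^n a_{n,k}\,v^{n-k}\Psi^{(k)}\;,
\end{equ}
so that $v$ solves \eqref{eq:DPD_v_general} with datum $v_0$ iff it is a fixed point of the affine map
\begin{equ}
\mcM v\colon t\mapsto S_t v_0 + \int_0^t S_{t-s}F(v_s,\bPsi_s)\,\mrd s\;,\qquad t\in[0,T]\;,
\end{equ}
on the Banach space $(\mcS,\|\cdot\|_\mcS)$ of \eqref{def:S-norm}. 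The plan is to show that, for $T=K^{-1/\kappa}$ with $\kappa$ large enough depending only on $\alpha,\eta,p,\gamma$, the map $\mcM$ sends the closed ball of radius $R\eqdef C(\|v_0\|_{\CC^\eta(\T^d)}+1)$ in $\mcS$ into itself and is there a $\tfrac12$-contraction. Banach's theorem then yields the unique solution with $\|v\|_\mcS\le R$, and the Lipschitz dependence on $(v_0,\bPsi)\in B_K$ follows from the same difference estimates applied to $\mcM v-\mcM\bar v$ with $v_0$ and $\bPsi$ also varied.

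\textbf{The estimates.} The two analytic inputs are the parabolic smoothing bound $\|S_tf\|_{\CC^{\beta'}(\T^d)}\lesssim t^{-\max\{\beta'-\beta,0\}/2}\|f\|_{\CC^{\beta}(\T^d)}$ (uniform over $t\in(0,1]$), and the product estimate \eqref{eq:mult} together with its consequences $\|v^m\|_{\CC^{\sigma}(\T^d)}\lesssim(1+\|v\|_\infty)^{m-1}\|v\|_{\CC^{\sigma}(\T^d)}$ for $\sigma\in(0,2)$ and, interpolating the $\CC^\eta$ and $\CC^\gamma$ parts of \eqref{def:S-norm}, $\|v_t\|_{\CC^{\sigma}(\T^d)}\lesssim t^{-\max\{\sigma-\eta,0\}/2}\|v\|_\mcS$ for $\sigma\in[0,\gamma]$. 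The initial term is immediate: $\|S_\bullet v_0\|_\mcS\lesssim\|v_0\|_{\CC^\eta(\T^d)}$, since the three weights in \eqref{def:S-norm} are exactly the smoothing rates of $S_t$ out of $\CC^\eta$ into $\CC^\eta$, $\CC^0$, $\CC^\gamma$. For the Duhamel term I would bound each monomial $v^{n-k}\Psi^{(k)}$ separately. The term $n=k=p$, equal to $\Psi^{(p)}\in\CC^{\alpha p}(\T^d)$ and $v$-independent, is the one forcing the endpoint $\gamma<2+\alpha p$: after Schauder and the $t^{(\gamma-\eta)/2}$ weight it contributes $\lesssim\|\bPsi\|_\mfB\,t^{1+(\alpha p-\eta)/2}$, whose exponent is positive by $\eta\le\gamma<2+\alpha p$. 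For $1\le k<p$ and $m\eqdef n-k$, estimate $v_s^m\Psi_s^{(k)}\in\CC^{\alpha k}(\T^d)$ through \eqref{eq:mult}, measuring $v_s^m$ in the \emph{least} regular space $\CC^{-\alpha k+\kappa_0}(\T^d)$ that still multiplies $\Psi_s^{(k)}$ (admissible since $\gamma>-\alpha(p-1)\ge-\alpha k$); for $\eta<0$ this yields $\|v_s^m\Psi_s^{(k)}\|_{\CC^{\alpha k}}\lesssim(1+\|v\|_\mcS)^m\,s^{(m\eta+\alpha k-\kappa_0)/2}\|\bPsi\|_\mfB$ (the case $\eta\ge0$ being easier), and after the Schauder loss $(t-s)^{-(\gamma-\alpha k)/2}$ the resulting $s$-integral converges at $s=t$ because $\gamma<2+\alpha k$ (implied by $\gamma<2+\alpha p$) and at $s=0$ because $m\eta+\alpha k>-2$, an inequality one checks over the full parameter range using $\eta>-\tfrac2p$, $\alpha(2p-1)>-2$ and $m+k\le p$, taking $\kappa_0$ small. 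The monomials $v^n$ ($k=0$) are handled directly via $\CC^0\to\CC^\gamma$ smoothing, the borderline $s\to0$ constraint there being $n\eta>-2$, i.e.\ again $\eta>-\tfrac2p$; the $\CC^0$- and $\CC^\eta$-components of $\mcM v$ need only smaller Schauder losses. Summing the finitely many contributions, $\|\mcM v-S_\bullet v_0\|_\mcS\lesssim T^{\theta}(1+\|v\|_\mcS+\|\bPsi\|_\mfB)^{p}$ and $\|\mcM v-\mcM\bar v\|_\mcS\lesssim T^{\theta}(1+\|v\|_\mcS+\|\bar v\|_\mcS+\|\bPsi\|_\mfB)^{p-1}\|v-\bar v\|_\mcS$ for some $\theta=\theta(\alpha,\eta,p,\gamma)>0$. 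Choosing $R\simeq\|v_0\|_{\CC^\eta}+1$ and then $\kappa$ large (so that $T^\theta=K^{-\theta/\kappa}$ absorbs the polynomial factors, all bounded by powers of $K\ge\|v_0\|_{\CC^\eta}+\|\bPsi\|_\mfB$) closes both the self-mapping and contraction properties.

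\textbf{Main obstacle.} The only step requiring genuine care is choosing, for each product $v^{n-k}\Psi^{(k)}$, the Hölder exponent in which to measure $v^{n-k}$: it must be (a) large enough for \eqref{eq:mult} to apply against $\Psi^{(k)}\in\CC^{\alpha k}$, (b) small enough that the $s\to0$ singularity produced by the heat smoothing out of the datum regularity $\CC^\eta$ stays integrable against the Schauder kernel, and (c) such that the Schauder loss incurred on mapping back into $\CC^\gamma$ leaves an integrable $s\to t$ singularity. Checking that a single choice works simultaneously for all $0\le k\le n\le p$ and all three components of $\|\cdot\|_\mcS$ is precisely where the hypotheses $2+\alpha p>-\alpha(p-1)$, $\gamma\in(-\alpha(p-1),2+\alpha p)$ and $\eta\in(-\tfrac2p,\gamma]$ are used; the remainder is a routine contraction argument.
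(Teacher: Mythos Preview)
Your proposal is correct and follows exactly the approach the paper has in mind: the paper does not give a proof of this proposition at all, merely remarking that ``one can use the smoothing properties of the heat flow and Banach's fixed point theorem'' and citing \cite[Prop.~4.4]{DPD03_SQ} and \cite[Thm.~3.3]{Tsatsoulis_Weber_18_SG} for similar statements. Your sketch supplies precisely this standard Da~Prato--Debussche contraction argument, with the correct identification of the parameter constraints \eqref{eq:beta_assump}, \eqref{eq:gamma_def} and $\eta>-2/p$ as exactly what is needed for the various Schauder and short-time integrability conditions to close.
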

\begin{remark}
The threshold $\eta>-\frac2p$ is sharp and is necessary even for local well-posedness of $(\partial_t +1 -\Delta)v = \pm v^p$ with $v_0\in\CC^{\eta}(\T^d)$,
see \cite{COW_22_norm_inf,Chevyrev22_norm_inf}.
\end{remark}
We now return to our problem \eqref{eq:DPD_v_naive} and set $a_{n,k}$ such that
\begin{equ}[eq:P_expan]
P(x+y) = \sum_{n=0}^p\sum_{k=0}^n a_{n,k} x^{n-k}y^k\;.
\end{equ}
In this case, if $\Psi^{(k)}=\Psi^k$, then \eqref{eq:DPD_v_general} agrees with \eqref{eq:DPD_v_naive}.
However, for $\Psi\in\CC^\alpha$ for $\alpha<0$, $\Psi^k$ is generically not well-defined for $k\geq 2$, which is the whole reason for introducing the general equation \eqref{eq:DPD_v_general}.

We next introduce a set of distributions on which `Wick powers' $\Psi^{:k:}$ are well-defined.
Fix a mollifier $\chi \in \CC^\infty_c(\R^d)$ with $\int\chi = 1$,
and write $\chi^\eps(x) = \eps^{-d}\chi(x/\eps)$ and $\Psi^\eps = \Psi*\chi^\eps$.

\begin{definition}\label{def:mfA}
Let $\mfA \subset \CC([0,1],\CC^\alpha(\T^d))$ be the subset of all time-dependent distributions $\Psi$ such that the collection of distributions
\begin{equ}[eq:wick_power_def]
\bPsi
= (\Psi,\Psi^{:2:},\ldots,\Psi^{:p:})
\eqdef
\lim_{\eps\downarrow 0} \big\{ H^{\sigma^2_\eps}_k(\Psi^\eps)
\big\}_{k=1,\ldots, p}
 \in \mfB
\end{equ}
exists as a limit in $\mfB$,
where 
\begin{equ}[eq:variance]
\sigma^2_\eps = \E_{\Phi\sim\nu}[ \Phi^\eps(0)^2] = \scal{\Cov_{\xi} *(1-\Delta)^{-1}*\chi^\eps,\chi^\eps}
\end{equ}
where $\nu$ is the law of the Gaussian field with covariance $\Cov_{\xi}*(1-\Delta)^{-1}$.
\end{definition}

\begin{remark}
The set $\mfA$ is measurable (this follows, e.g. from the countable criterion for negative H\"older spaces of \cite[Sec.~12]{Caravenna_Zambotti_20_Reconstruct} or \cite[Prop.~3.20]{Hairer14}).
Furthermore, the map $\Psi\mapsto \Psi^{:k:}$ is measurable.

It is not entirely trivial, however, that $\mfA$ is non-empty. In fact, if $\lim_{\eps\downarrow 0}\sigma_\eps^2 = \infty$,
which is the case of interest in this section,
then the intersection of $\mfA$ with, say, $\CC([0,1],L^\infty(\T^d))$ is clearly empty.
Propositions \ref{prop:wick_GFF} and \ref{prop:Psi0_mfC} will provide us with example distributions in $\mfA$.
\end{remark}

Recall the binomial identity for Hermite polynomials
\begin{equ}[eq:binomial]
H^{\sigma^2}_k(x+y) = \sum_{j=0}^k \binom kj x^j H^{\sigma^2}_{k-j}(y)\;.
\end{equ}
We further recall assumption \eqref{eq:beta_assump} and a choice for $\gamma$ as in \eqref{eq:gamma_def}.
The following result shows that $\mfA$ is closed under addition of sufficiently regular functions.

\begin{lemma}\label{lem:v_Psi}
If $\Psi\in\mfA$ and $v\in \CC([0,1],\CC^\gamma(\T^d))$,
then $\Psi+v\in\mfA$ and
\begin{equ}[eq:binomial_v_Psi]
(v+\Psi)^{:k:} = \sum_{j=0}^k \binom{k}{j} v^j \Psi^{:k-j:}\;.
\end{equ}
Moreover, for $\Psi$ fixed,
\begin{equ}
\CC([0,1],\CC^\gamma(\T^d)) \ni v \mapsto (v+\Psi)^{:k:} \in \CC([0,1],\CC^{\alpha k}(\T^d))
\end{equ}
is locally Lipschitz and polynomial of degree $k$.
\end{lemma}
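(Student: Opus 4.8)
The plan is to reduce everything to the Hermite binomial identity \eqref{eq:binomial} together with the multiplication estimate recalled around \eqref{eq:mult}. Since mollification is linear, $(v+\Psi)^\eps=v^\eps+\Psi^\eps$, so applying \eqref{eq:binomial} with $x=v^\eps$ and $y=\Psi^\eps$ gives
\begin{equ}
H^{\sigma^2_\eps}_k\big((v+\Psi)^\eps\big)=\sum_{j=0}^{k}\binom{k}{j}(v^\eps)^j\,H^{\sigma^2_\eps}_{k-j}(\Psi^\eps)\;.
\end{equ}
I would then pass to the limit $\eps\downarrow0$ in $\CC([0,1],\CC^{\alpha k}(\T^d))$ term by term. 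For $j=0$ this is exactly $H^{\sigma^2_\eps}_k(\Psi^\eps)\to\Psi^{:k:}$, which holds since $\Psi\in\mfA$. For $1\le j\le k$, the factor $H^{\sigma^2_\eps}_{k-j}(\Psi^\eps)$ converges to $\Psi^{:k-j:}$ in $\CC([0,1],\CC^{\alpha(k-j)}(\T^d))$, again by $\Psi\in\mfA$, while standard mollification estimates show that $(v^\eps)^j$ is bounded in $\CC([0,1],\CC^\gamma(\T^d))$ uniformly in $\eps$ (using that $\CC^\gamma(\T^d)$ is a Banach algebra for $\gamma>0$) and converges to $v^j$ in $\CC([0,1],\CC^{\gamma'}(\T^d))$ for every $\gamma'<\gamma$. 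Since $1\le k-j\le p-1$ one may choose $\gamma'\in(-\alpha(k-j),\gamma)$ — possible because $\gamma>-\alpha(p-1)\ge-\alpha(k-j)$ — and then continuity of multiplication $\CC^{\gamma'}(\T^d)\times\CC^{\alpha(k-j)}(\T^d)\to\CC^{\alpha(k-j)}(\T^d)$ upgrades the two convergences to $(v^\eps)^j H^{\sigma^2_\eps}_{k-j}(\Psi^\eps)\to v^j\Psi^{:k-j:}$ in $\CC([0,1],\CC^{\alpha(k-j)}(\T^d))\hookrightarrow\CC([0,1],\CC^{\alpha k}(\T^d))$, the last embedding because $\alpha<0$ gives $\alpha(k-j)\ge\alpha k$. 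Summing over $j$ shows that the limit in \eqref{eq:wick_power_def} exists for $v+\Psi$, hence $v+\Psi\in\mfA$, and identifies $(v+\Psi)^{:k:}=\sum_{j=0}^k\binom kj v^j\Psi^{:k-j:}$, which is \eqref{eq:binomial_v_Psi}.

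For the regularity claim, \eqref{eq:binomial_v_Psi} exhibits $v\mapsto(v+\Psi)^{:k:}$ as the finite sum $\Psi^{:k:}+\sum_{j=1}^{k}\binom kj\,v^j\Psi^{:k-j:}$. For each $1\le j\le k$, the map $(v_1,\ldots,v_j)\mapsto v_1\cdots v_j\,\Psi^{:k-j:}$ is bounded $j$-linear from $\big(\CC([0,1],\CC^\gamma(\T^d))\big)^{j}$ into $\CC([0,1],\CC^{\alpha k}(\T^d))$: multiply the $v_i$ together in the algebra $\CC^\gamma(\T^d)$, then apply \eqref{eq:mult} with $k-j\le p-1$ (for $j=k$ the last step is trivial since $\Psi^{:0:}=1$, and one uses instead the embedding $\CC^\gamma\hookrightarrow\CC^{\alpha k}$). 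Restricting each such map to the diagonal, $v\mapsto(v+\Psi)^{:k:}$ is a sum of homogeneous polynomial maps of degrees $0,1,\ldots,k$, hence a polynomial map of degree $k$, its top-degree part $v\mapsto v^k$ being a nonzero multilinear map on the diagonal. A polynomial map of bounded degree between Banach spaces is automatically locally Lipschitz, with Lipschitz modulus on a ball of radius $R$ controlled by a multiple of $(1+R)^{k-1}$.

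The only step that is not essentially formal is the joint space--time convergence $(v^\eps)^j\to v^j$ in $\CC([0,1],\CC^{\gamma'}(\T^d))$, obtained by interpolating the $\eps$-uniform $\CC^\gamma$-bound against uniform convergence in $C^0$ (both uniform in $t\in[0,1]$ by compactness of $v([0,1])\subset\CC^\gamma(\T^d)$), together with the index bookkeeping that makes \eqref{eq:mult} applicable at the correct regularity; I expect this to be the main, though still routine, obstacle. It is worth recording that the inequalities $1\le k-j\le p-1$ and $\gamma>-\alpha(k-j)$ used above are precisely what \eqref{eq:beta_assump} and the choice \eqref{eq:gamma_def} of $\gamma$ provide, and that the extreme case $k=p$, $j=0$ — no multiplication by $v$, the limit being supplied directly by $\Psi\in\mfA$ — is consistent with this range.
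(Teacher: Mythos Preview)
Your proof is correct and follows exactly the paper's approach, which simply cites the binomial identity \eqref{eq:binomial}, boundedness of multiplication \eqref{eq:mult}, and the fact that $\CC^\gamma(\T^d)$ is a Banach algebra. Note that since the paper defines $\CC^\gamma(\T^d)$ as the closure of smooth functions (the little H\"older space), mollification already gives $v^\eps\to v$ in $\CC([0,1],\CC^\gamma(\T^d))$, so your detour through an auxiliary exponent $\gamma'<\gamma$ is harmless but unnecessary.
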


\begin{proof}
This follows from boundedness of multiplication \eqref{eq:mult}, the fact that $\CC^\gamma(\T^d)$ is a Banach algebra,
and the binomial identity \eqref{eq:binomial}.
\end{proof}

\begin{remark}\label{rem:gamma_upper_bound}
The upper bound $\gamma<2+\alpha p$ in \eqref{eq:gamma_def}, which is assumed throughout this subsection, is not needed for Lemma \ref{lem:v_Psi}.
\end{remark}


For $\Psi \in \mfA$ and a polynomial of one variable $P(t) = \sum_{j=0}^{p} a_{j} t^j$, we adopt the standard notation
\begin{equ}
\Wick{P(\Psi)} = \sum_{j=0}^{p} a_{j} \Psi^{:j:}
\end{equ}
for the corresponding local Wick polynomial.

For $\eta$ as in Proposition \ref{prop:local_well_posedness} and $\Psi \in \mfA$, we now consider the `remainder equation'
\begin{equ}[eq:DPD_v]
(\partial_t +1 -\Delta) v = \Wick{P(v+\Psi)}\;,\quad v_0\in\CC^{\eta}(\T^d)\;,
\end{equ}
which, due to the binomial-type identity \eqref{eq:binomial_v_Psi}, is a special case of \eqref{eq:DPD_v_general}.
Proposition \ref{prop:local_well_posedness}
therefore implies that existence and uniqueness of local-in-time solutions for $v$ in the Banach space $\mcS$.

\begin{remark}\label{rem:renormalised_PDE}
By the local Lipschitz continuity of Proposition \ref{prop:local_well_posedness}
in both the $\bPsi$ variable and the initial condition, 
the function $u = v+\Psi$
satisfies $u=\lim_{\eps\downarrow0}u^\eps$,
where $u^\eps$ solves the `renormalised' PDE
\begin{equ}[eq:renormalised_PDE]
(\partial_t + 1 -\Delta) u^\eps = \sum_{j=0}^p a_j H_j^{\sigma_\eps^2}(u^\eps) + \xi*\chi^\eps\;,\quad u^\eps_0 = v_0^\eps + \Psi_0^\eps\;,
\end{equ}
where $\xi \eqdef (\partial_t +1-\Delta)\Psi$.
\end{remark}

\subsubsection{Probabilistic theory}

Our interest in $\mfA$ comes from the fact that the random distribution $\Psi$ given as the solution to $(\partial_t +1-\Delta)\Psi = \xi$ with initial condition $\Psi_0\sim \nu$ for $\nu$ as in \eqref{eq:variance} almost surely takes values in $\mfA$,
i.e. the Wick powers of $\Psi$ are well defined and behave as required in Definition \ref{def:mfA}.
We refer to the survey of Da Prato--Tubaro \cite{Da_Prato_Tubaro_23_Wick} for a more in-depth introduction to Wick powers.

Recall that $\alpha<0$ is fixed.
As a warm up, we have the following result for the initial condition $\Psi_0$.
Recall that $\nu$ is the Gaussian field with covariance $\Cov_\nu = (1-\Delta)^{-1}*\Cov_{\xi}$.

\begin{assumption}\label{ass:covar_bound}
There exists $\rho > \max\{-d/p,2\alpha\}$ such that
$|\Cov_\nu(x)|\lesssim |x|^{\rho}$ uniformly in $x\in\T^d\setminus\{0\}$.
\end{assumption}

\begin{proposition}\label{prop:wick_GFF}
Suppose Assumption \ref{ass:covar_bound} holds and
denote $\Psi_0\sim\nu$.
Then for all integers $k=1,\ldots,p$
the limit $\lim_{\eps\downarrow 0} H^{\sigma^2_\eps}_k(\Psi^\eps_0)$
exists in $\CC^{\alpha k}(\T^d)$ in $L^q(\nu)$ for all $q \in [1,\infty)$ and $\nu$-almost surely.
\end{proposition}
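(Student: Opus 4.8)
The plan is to identify $H^{\sigma^2_\eps}_k(\Psi^\eps_0)$ as a $\CD'(\T^d)$-valued element of the $k$-th homogeneous Wiener chaos of $\Psi_0$ and then to run the standard Wick-power convergence argument; the only input specific to the present setting is Assumption \ref{ass:covar_bound}. Write $K\eqdef(1-\Delta)^{-1}*\Cov_\xi$ for the covariance kernel of $\nu$, so $\E_\nu[\Psi_0(y)\Psi_0(z)]=K(y-z)$; recall $K$ is smooth on $\T^d\setminus\{0\}$ with $|K(w)|\lesssim|w|^\rho$. Setting $\check\chi(w)\eqdef\chi(-w)$ and $K_{\eps,\eps'}\eqdef K*\chi^\eps*\check\chi^{\eps'}$, one has $\E_\nu[\Psi^\eps_0(y)\Psi^{\eps'}_0(z)]=K_{\eps,\eps'}(y-z)$ and $\sigma^2_\eps=K_{\eps,\eps}(0)$ as in \eqref{eq:variance}. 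Since $H^{\sigma^2}_k$ is, by \eqref{eq:def_hermite}, precisely the Wick power $\Wick{X^k}$ of a centred Gaussian $X$ of variance $\sigma^2$, the field $H^{\sigma^2_\eps}_k(\Psi^\eps_0)$ lies pointwise in the $k$-th chaos, and Wick's theorem gives, for the rescaled test functions $\phi^\lambda_x$ appearing in the definition of the H\"older--Besov norm,
\begin{equ}
\E_\nu\big[\scal{H^{\sigma^2_\eps}_k(\Psi^\eps_0),\phi^\lambda_x}\,\scal{H^{\sigma^2_{\eps'}}_k(\Psi^{\eps'}_0),\phi^\lambda_x}\big]=k!\int\!\!\int\phi^\lambda_x(y)\,\phi^\lambda_x(z)\,K_{\eps,\eps'}(y-z)^k\,\mrd y\,\mrd z\;,
\end{equ}
and, for the second moment of the difference $D_{\eps,\eps'}\eqdef H^{\sigma^2_\eps}_k(\Psi^\eps_0)-H^{\sigma^2_{\eps'}}_k(\Psi^{\eps'}_0)$, the analogous formula with $K_{\eps,\eps'}(y-z)^k$ replaced by $K_{\eps,\eps}(y-z)^k-2K_{\eps,\eps'}(y-z)^k+K_{\eps',\eps'}(y-z)^k$.

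The main step, and the one place where Assumption \ref{ass:covar_bound} really enters, is the uniform singularity bound
\begin{equ}
|K_{\eps,\eps'}(w)|\lesssim|w|^\rho\;,\qquad\text{uniformly in }\eps,\eps'\in(0,1]\text{ and }w\in\T^d\;.
\end{equ}
For $\rho\ge0$ the kernel $K$ is bounded and this is trivial; for $\rho\in(-d,0)$ it follows from $|K(w)|\lesssim|w|^\rho$, the smoothness of $K$ off the origin, and the elementary fact that convolving a function bounded by $|w|^\rho$ with an approximate identity at scale $\eps$ preserves that bound up to a multiplicative constant. Granting it, the condition $\rho>-d/p$ forces $\rho k>-d$ for every $k\in\{1,\ldots,p\}$, so $w\mapsto|w|^{\rho k}$ is locally integrable, and the substitution $y=x+\lambda y'$, $z=x+\lambda z'$ in the first display yields $\E_\nu|\scal{H^{\sigma^2_\eps}_k(\Psi^\eps_0),\phi^\lambda_x}|^2\lesssim\lambda^{\rho k}$, uniformly over $x$, $\phi$ and $\lambda\le1$; the condition $\rho>2\alpha$ then gives $\rho k/2>\alpha k$, leaving room above the target regularity $\alpha k$. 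For the Cauchy estimate one upgrades the pointwise convergence $K_{\eps,\eps'}\to K$ to the quantitative bound $|K_{\eps,\eps'}(w)-K(w)|\lesssim(\eps\vee\eps')^\theta|w|^{\rho-\theta}$ for small $\theta>0$ (the same mollifier estimates, now also using a $C^1$ bound on $K$ off the origin); writing $K_{\eps,\eps}^k-2K_{\eps,\eps'}^k+K_{\eps',\eps'}^k$ as a combination of differences $a^k-b^k$ with $|a|,|b|\lesssim|w|^\rho$ and $|a-b|\lesssim(\eps\vee\eps')^\theta|w|^{\rho-\theta}$ gives $|K_{\eps,\eps}(w)^k-2K_{\eps,\eps'}(w)^k+K_{\eps',\eps'}(w)^k|\lesssim(\eps\vee\eps')^\theta|w|^{\rho k-\theta}$, and the same rescaling gives $\E_\nu|\scal{D_{\eps,\eps'},\phi^\lambda_x}|^2\lesssim(\eps\vee\eps')^\theta\lambda^{\rho k-\theta}$, with $\rho k-\theta>2\alpha k$ for $\theta$ small.

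It remains to bootstrap these second-moment bounds. By Nelson's hypercontractivity, all $L^q(\nu)$ norms on the $k$-th chaos are comparable to the $L^2(\nu)$ norm, so the two estimates above hold with $|\cdot|^2$ replaced by $|\cdot|^{2n}$ and the right-hand sides raised to the power $n$, for every $n\ge1$. Inserting this into the Kolmogorov-type criterion for negative H\"older--Besov spaces (e.g.\ \cite[Prop.~3.20]{Hairer14}, or the countable criterion of \cite[Sec.~12]{Caravenna_Zambotti_20_Reconstruct}), with $n$ large enough to absorb the dimensional loss and using $\rho k/2>\alpha k$ to land in $\CC^{\alpha k}(\T^d)$, gives $\E_\nu\|D_{\eps,\eps'}\|^q_{\CC^{\alpha k}(\T^d)}\lesssim(\eps\vee\eps')^{\theta q/2}$ for every $q\in[1,\infty)$. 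Thus $(H^{\sigma^2_\eps}_k(\Psi^\eps_0))_\eps$ is $L^q(\nu;\CC^{\alpha k}(\T^d))$-Cauchy with a rate summable along $\eps=2^{-m}$, which yields both $L^q(\nu)$ convergence for every $q$ and, by Borel--Cantelli, $\nu$-a.s.\ convergence; the passage from the dyadic sequence to the full family $\eps\downarrow0$ is routine, e.g.\ via the analogous bound with $|\eps-\eps'|^\theta$ in place of $(\eps\vee\eps')^\theta$ and Kolmogorov continuity in the parameter $\eps$. The case $k=1$ is just $\Psi^\eps_0\to\Psi_0$ in $\CC^\alpha(\T^d)$, the same computation without renormalisation; see \cite{Da_Prato_Tubaro_23_Wick} for a textbook account of such estimates. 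I expect the uniform-in-$\eps$ singularity bound on $K_{\eps,\eps'}$, together with the bookkeeping that the integrability threshold $\rho k>-d$ and the regularity threshold $\rho k/2>\alpha k$ are exactly what Assumption \ref{ass:covar_bound} supplies, to be the only non-routine parts.
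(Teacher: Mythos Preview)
Your proposal is correct and follows essentially the same route as the paper's (explicitly sketchy) proof: compute the second moment of $\scal{H^{\sigma^2_\eps}_k(\Psi^\eps_0),\phi^\lambda_x}$ via the Wick formula, feed in Assumption~\ref{ass:covar_bound} to obtain the bound $\lesssim\lambda^{\rho k}$ together with a rate in $\eps$, upgrade to all moments by hypercontractivity, and then run Kolmogorov's theorem in the mollification parameter to get both $L^q$ and almost sure convergence. The only visible difference is cosmetic---you pass through a dyadic Borel--Cantelli step before invoking Kolmogorov in $\eps$, while the paper goes to Kolmogorov in $\eps$ directly---and your appeal to a $C^1$ bound on $K$ off the origin is slightly more than Assumption~\ref{ass:covar_bound} literally states, but the Cauchy estimate you need is in any case standard and the paper does not spell it out either.
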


\begin{proof}
This is standard, so we only sketch the idea.
Observe the bound
\begin{equ}[eq:GFF_Wick]
\E \scal{\Psi^{:k:}_0,\phi_x^\lambda}^2 = \int_{\T^{2d}} \mrd y \mrd z \phi_x^\lambda(y)\phi_x^\lambda(z) \Cov_{\nu}(y-z)^k \lesssim \lambda^{\rho k}\;,
\end{equ}
where we used $\Cov_{\nu}(y-z)^k \lesssim |y-z|^{\rho k}$ and $\rho k > -d$ by Assumption \ref{ass:covar_bound}.
The approximations $H^{\sigma^2_\eps}_k(\Psi^\eps_0)$ satisfy the same bound
in which we replace $\Cov_{\nu}$ by $\Cov_{\nu}*\chi^\eps*\chi^\eps(-\cdot)$.
Since $\alpha<\rho/2$, by equivalence of Gaussian moments and by a Kolmogorov-type argument, one can show that there exists $\kappa>0$ sufficiently small such that, for all $q\in [1,\infty)$, uniformly in $\eps,\bar\eps\in (0,1)$,
\begin{equ}
\E
\Big[
\|H^{\sigma^2_\eps}_k(\Psi^\eps_0) - H^{\sigma^2_{\bar\eps}}_k(\Psi^{\bar\eps}_0) \|_{\CC^{\alpha k}}^q \Big]^{1/q}\lesssim |\eps - \bar\eps|^\kappa\;.
\end{equ}
Applying Kolmogorov's theorem to the mollification variable $\eps$,
we conclude that, for all $q\in [1,\infty)$,
\begin{equ}
\E\Big[
\Big(\sup_{0<\eps < \bar\eps < 1}|\eps-\bar\eps|^{-\kappa/2} \|H^{\sigma^2_\eps}_k(\Psi^\eps_0) - H^{\sigma^2_{\bar\eps}}_k(\Psi^{\bar\eps}_0)
\|_{\CC^{\alpha k}}
\Big)^q
\Big] < \infty\;.
\end{equ}
This shows convergence of $H^{\sigma^2_\eps}_k(\Psi^\eps_0)$ in $\CC^{\alpha k}$ in $L^q(\nu)$ and almost surely.
\end{proof}


For the local solution theory of the \emph{stochastic} PDE \eqref{eq:SPDE},
we need to improve Proposition \ref{prop:wick_GFF} and show the Wick powers of the linear solution $\Psi$ from \eqref{eq:SHE}
takes values in $\mfA$.
For our purposes, we crucially need the initial condition of $\Psi$ to be \emph{any}\footnote{This in contrast to most other presentations which take the initial condition as $0$ or take $\Psi$ as the stationary solution to $(\partial_t +1-\Delta)\Psi = \xi$, see e.g. \cite{Tsatsoulis_Weber_18_SG}.} element of a deterministic set $\mfC\subset\CC^\alpha(\T^d)$ which supports $\nu$.
The following definition of $\mfC$ is the only result of this subsection that we believe is new.

\begin{definition}\label{def:mfC}
Denote by $\mfC$ the set of all $\Psi_0\in\CC^\alpha(\T^d)$ such that
the stochastic process $\{H_k^{\sigma_\eps^2}(\Psi^\eps_t) \}_{t\in[0,1]}$ for $k=1,\ldots, p$, converges in $\CC([0,1],\CC^{\alpha k}(\T^d))$ in $L^q(\P_\xi)$ for every $q\in [1,\infty)$ and $\P_\xi$-almost surely as $\eps\downarrow0$.

Above, as usual, $\Psi^\eps=\Psi*\chi^\eps$ and the stochastic process $\{\Psi_t\}_{t\in [0,1]}$ solves the linear stochastic heat equation $(\partial_t+1-\Delta)\Psi=\xi$ with initial condition $\Psi_0$,
and $\xi$, as in \eqref{eq:SPDE}, is Gaussian on $\R \times \T^d$ with covariance $\delta\otimes \Cov_{\xi}$.
We denote $\Psi^{:k:} = \lim_{\eps\downarrow0}H_k^{\sigma_\eps^2}(\Psi^\eps)$ and 
\begin{equ}[eq:C_norm]
\fancynorm{\Psi_0}_q \eqdef \max_{k=1,\ldots, p} \{ \E_\xi \|\Psi^{:k:}\|^q_{\CC([0,1],\CC^{\alpha k}(\T^d))}\}^{1/q}
\end{equ}
and, as earlier, $\bPsi = (\Psi,\Psi^{:2:},\ldots,\Psi^{:p:})$.
\end{definition}
Elements of $\mfC$ will play the role of sufficiently nice initial conditions for the SPDE \eqref{eq:SPDE}
for which the generator exists (see Lemma \ref{lem:generator}).
We first show that $\nu(\mfC)=1$.
%
%
%
\begin{proposition}\label{prop:Psi0_mfC}
Suppose Assumption \ref{ass:covar_bound} holds and
let $\Psi_0\sim \nu$.
Then $\nu(\mfC)=1$ and
$\E \fancynorm{\Psi_0}_q^q < \infty$ for all $q\in [1,\infty)$.
\end{proposition}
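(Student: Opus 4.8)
\emph{Proof idea.}
The plan is to exploit that $\nu$ is invariant for the linear equation \eqref{eq:SHE}: if $\Psi_0\sim\nu$ is independent of $\xi$, then the solution $(\Psi_t)_{t\in[0,1]}$ of $(\partial_t+1-\Delta)\Psi=\xi$ is a stationary (in time) Gaussian field, so $\Psi_t\sim\nu$ for every $t$; in particular $\Psi_0\in\CC^\alpha(\T^d)$ for $\nu$-a.e.\ $\Psi_0$ by the $k=1$ case of Proposition \ref{prop:wick_GFF}. Writing $c_0\eqdef(1-\Delta)^{-1}*\Cov_\xi$, which up to a universal constant is the spatial covariance function of $\nu$ (cf.\ \eqref{eq:variance}), the Markov property of the linear SHE gives, for $t\ge s$,
\begin{equ}
\E[\Psi_t(x)\Psi_s(y)]=\big(e^{-(t-s)(1-\Delta)}c_0\big)(x-y)\;,
\end{equ}
and Assumption \ref{ass:covar_bound} together with the contractivity of the heat flow then yields the parabolic bound $|\E[\Psi_t(x)\Psi_s(y)]|\lesssim(|t-s|+|x-y|^2)^{\rho/2}$. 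By Wick's theorem $\Psi^{:k:}$ lies in the $k$-th homogeneous Wiener chaos and $\E[\Psi^{:k:}_t(x)\Psi^{:k:}_s(y)]=k!\,\E[\Psi_t(x)\Psi_s(y)]^k$; the mollified objects $H^{\sigma^2_\eps}_k(\Psi^\eps)$ satisfy the same identity with the two-point function replaced by its spatial mollification, the renormalisation constant $\sigma^2_\eps$ being exactly this mollified function on the diagonal (independent of $t$ by stationarity, and equal to the constant in \eqref{eq:variance}). Thus everything reduces to running the proof of Proposition \ref{prop:wick_GFF} with the time variable carried along.

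Concretely, I would test $\Psi^{:k:}$ against \emph{parabolic} test functions $\phi^\lambda_z$ on $[0,1]\times\T^d$ and bound, using $\rho>-d/p$ so that $\rho k>-d$ and the parabolic singularity is integrable,
\begin{equ}
\E\scal{\Psi^{:k:},\phi^\lambda_z}^2
= k!\iint \phi^\lambda_z(w)\,\phi^\lambda_z(w')\,\E[\Psi(w)\Psi(w')]^k\,\mrd w\,\mrd w'
\lesssim \lambda^{\rho k}\;,
\end{equ}
uniformly in $z\in[0,1]\times\T^d$ and $\lambda\in(0,1]$, together with the analogous bound for the difference of the $\eps$- and $\bar\eps$-mollified Wick powers, which carries an extra factor $|\eps-\bar\eps|^{\kappa'}$ for some $\kappa'>0$ --- here the \emph{strict} inequality $\rho>2\alpha$ is what leaves H\"older room to absorb the mollification error, exactly as in Proposition \ref{prop:wick_GFF}. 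Equivalence of moments inside the fixed chaos of order $k$ turns these $L^2(\nu\otimes\P_\xi)$ estimates into $L^q(\nu\otimes\P_\xi)$ estimates for all $q<\infty$, and a Kolmogorov--Chentsov argument in the parabolic scaling gives $\Psi^{:k:}\in\CC^{\alpha k}([0,1]\times\T^d)$ with $\E\|\Psi^{:k:}\|^q_{\CC^{\alpha k}([0,1]\times\T^d)}<\infty$ for all $q$ (the target regularity being attainable since $\alpha k<\rho k/2$).

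It remains to transfer this to the space of Definition \ref{def:mfC} and conclude. The standard embedding $\CC^{\alpha k}([0,1]\times\T^d)\hookrightarrow\CC([0,1],\CC^{\alpha k}(\T^d))$, valid because $\alpha k\in(-2,0)$ (which holds since $\alpha p>-2$ by \eqref{eq:beta_assump}), transfers the above moment bounds to the space appearing in Definition \ref{def:mfC}, so that over the stationary joint law $\E\|\Psi^{:k:}\|^q_{\CC([0,1],\CC^{\alpha k}(\T^d))}<\infty$; by Fubini this gives $\E_{\Psi_0\sim\nu}\fancynorm{\Psi_0}^q_q\le\sum_{k=1}^p\E\|\Psi^{:k:}\|^q_{\CC([0,1],\CC^{\alpha k}(\T^d))}<\infty$. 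A last application of Kolmogorov's theorem in the mollification parameter $\eps$ combined with dominated convergence (again as in Proposition \ref{prop:wick_GFF}) shows that, for each $k$, $H^{\sigma^2_\eps}_k(\Psi^\eps)\to\Psi^{:k:}$ in $\CC([0,1],\CC^{\alpha k}(\T^d))$, both in $L^q(\nu\otimes\P_\xi)$ and $\nu\otimes\P_\xi$-almost surely; by Fubini, for $\nu$-a.e.\ $\Psi_0$ this convergence holds $\P_\xi$-almost surely and in $L^q(\P_\xi)$, i.e.\ $\Psi_0\in\mfC$, so $\nu(\mfC)=1$. The main obstacle relative to Proposition \ref{prop:wick_GFF} is purely the bookkeeping of \emph{joint} space--time regularity: one must control time increments of the Wick powers uniformly and pass through the parabolic H\"older--Besov scale. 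Once stationarity identifies all the covariances with powers of the single two-point function above and the contractivity of $e^{-r(1-\Delta)}$ is used to bound it parabolically, the remaining analytic content is the same Gaussian hypercontractivity plus Kolmogorov estimate as in Proposition \ref{prop:wick_GFF}.
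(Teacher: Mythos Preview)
Your overall plan---exploit stationarity of the linear SHE under $\nu$, identify the covariance of the Wick powers, then combine Gaussian hypercontractivity with Kolmogorov---is exactly the paper's strategy. The gap is in how you pass from space--time regularity to the target space $\CC([0,1],\CC^{\alpha k}(\T^d))$.

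The embedding you invoke,
\[
\CC^{\alpha k}\big([0,1]\times\T^d\big)\hookrightarrow\CC\big([0,1],\CC^{\alpha k}(\T^d)\big)\quad\text{for }\alpha k\in(-2,0),
\]
is \emph{false} in general. A deterministic counterexample is $u(t,x)=|t-\tfrac12|^{\beta}$ with $\beta\in(-1,0)$: one checks directly that $u\in\CC^{2\beta}([0,1]\times\T^d)$ in the paper's parabolic sense, yet $t\mapsto u_t$ is not even bounded in $\CC^{2\beta}(\T^d)$ near $t=\tfrac12$, since the $\CC^{2\beta}$-norm of a constant dominates the constant itself. A stochastic counterexample in the same regularity range is space--time white noise on $[0,1]\times\T$, which lies in the parabolic $\CC^{-3/2-}$ but cannot be evaluated at any fixed time. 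The condition $\alpha k>-2$ ensures only that a parabolic test function concentrated in time has the ``right'' scaling weight; it does not manufacture temporal continuity out of a space--time distribution.

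The paper avoids this detour entirely: it works directly with \emph{spatial} test functions and proves the two-parameter increment bound
\[
\big(\E\|Z^\eps_s-Z^{\bar\eps}_t\|_{\CC^{\alpha k}(\T^d)}^q\big)^{1/q}\lesssim|t-s|^{\kappa/2}+|\eps-\bar\eps|^\kappa,
\]
using that the spatial law of $\Psi_t$ (hence of $Z^\eps_t$) coincides with that of $\Psi_0$ by stationarity---so your covariance computation is still the right input, just tested spatially rather than parabolically. Kolmogorov is then applied first in $t$ (landing in $\CC^{\kappa/8}([0,1],\CC^{\alpha k})$, which already lives in the correct space) and then in $\eps$; the final Fubini/conditioning step to deduce $\nu(\mfC)=1$ is as you describe. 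In short, drop the parabolic space and the false embedding, and control time increments of $\|Z^\eps_t\|_{\CC^{\alpha k}(\T^d)}$ directly; the rest of your argument goes through unchanged. (A minor aside: the parabolic bound on $\E[\Psi_t(x)\Psi_s(y)]$ does hold, but it is a heat-kernel regularisation estimate on $|x|^\rho$, not a consequence of contractivity alone.)
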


\begin{proof}
Denoting $Z^\eps_t = H_k^{\sigma_\eps^2}(\Psi^\eps_t)$ and $\E = \E_{\Psi_0\sim\nu} \E_\xi$,
it suffices to prove that there exists $\kappa>0$ such that, for any $q\in [1,\infty)$, uniformly in $s,t\in [0,1],
\eps,\bar\eps\in (0,1)$,
\begin{equ}[eq:EZ]
(\E \|Z^\eps_{s} - Z^{\bar\eps}_t\|_{\CC^{\alpha k}(\T^d)}^q )^{1/q}
\lesssim |t-s|^{\kappa/2} + |\eps-\bar\eps|^\kappa\;.
\end{equ}
Indeed, denoting $Z^\eps_{s,t} = Z^\eps_t-Z^\eps_s$,
\eqref{eq:EZ} implies that
\begin{equ}
(\E \|Z^\eps_{s,t} - Z^{\bar\eps}_{s,t}\|_{\CC^{\alpha k}(\T^d)}^q )^{1/q} \lesssim
|t-s|^{\kappa/4}|\eps-\bar\eps|^{\kappa/2}
\end{equ}
which in turn implies, by Kolmogorov's theorem (first applied to the time variable $t$ and then to the mollification scale $\eps$),
denoting $\mcE = \CC^{\kappa/8}([0,1],\CC^{\alpha k}(\T^d))$,
\begin{equ}
\E\Big[
\Big(\sup_{0<\eps < \bar\eps < 1}|\eps-\bar\eps|^{-\kappa/4} \|Z^\eps - Z^{\bar\eps}
\|_{\mcE}
\Big)^q
\Big] < \infty\;.
\end{equ}
In particular, sampling first $\Psi_0$, we have
 $\E_\xi\sup_{\eps,\bar\eps}(|\eps-\bar\eps|^{-\kappa/4}\|Z^\eps-Z^{\bar\eps}\|)^q = \E[\sup_{\eps,\bar\eps}(|\eps-\bar\eps|^{-\kappa/4}\|Z^\eps - Z^{\bar\eps}
\|_{\mcE})^q|\Psi_0] \leq C$ where $C$ is a random variable in $L^1(\nu)$, which implies that, for $\nu$-almost every $\Psi_0$, the limit $\lim_{\eps\downarrow0}Z^\eps$ exists in $\mcE$ in $L^q(\P_\xi)$ and $\P_\xi$-a.s.,
and this limit is in $L^q(\nu)$.

To prove \eqref{eq:EZ}, by equivalence of Gaussian moments and by decomposing a test function $\phi^\lambda_x$ into a wavelet basis as in \cite[Prop.~3.20]{Hairer14}, it furthermore suffices to show that
\begin{equ}
\E|\scal{Z_{t}^{\eps} -Z_{s}^{\bar\eps},\phi^\lambda_x}|^2 \lesssim \lambda^{k\rho-\kappa}(|t-s|^{\kappa/2} + |\eps-\bar\eps|^\kappa)
\end{equ}
uniformly over $x,\lambda,\phi,s,t,\eps,\bar\eps$ - see the proof of \cite[Prop.~9.5]{Hairer14} for more detail.
However, it is simple to verify that the (spatial) covariance of $\Psi_t$ is the same as that of $\Psi_0$.
Therefore $Z_{t}^\eps$ is the $k$-th Wick power of the stationary solution of the SHE driven by $\xi^\eps$,
and the above bound follows from an identity similar to \eqref{eq:GFF_Wick}.
\end{proof} 
Next, we show that $\mfC$ is stable under addition of sufficient regular functions.

\begin{lemma}\label{lem:Psi_plus_v}
Consider $\Psi_0\in \mfC$ and $v_0\in\CC^\gamma(\T^d)$ with $\gamma > -\alpha(p-1)$.
Then $\Psi_0+v_0 \in \mfC$ with $\fancynorm{\Psi_0+v_0}_q \lesssim (1+\fancynorm{\Psi_0}_q) (1 + \|v_0\|_{\CC^\gamma}^{p})$.
\end{lemma}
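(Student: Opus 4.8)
The plan is to reduce the statement about $\Psi_0 + v_0$ to the already-established facts about $\Psi_0 \in \mfC$ (Definition \ref{def:mfC}) together with the deterministic binomial identity for Wick powers (Lemma \ref{lem:v_Psi}). Write $\Psi$ for the solution of $(\partial_t + 1 - \Delta)\Psi = \xi$ with initial condition $\Psi_0$, and $\tilde\Psi$ for the solution with initial condition $\Psi_0 + v_0$. By linearity, $\tilde\Psi = \Psi + w$, where $w$ solves the \emph{deterministic} heat equation $(\partial_t + 1 - \Delta)w = 0$ with $w_0 = v_0 \in \CC^\gamma(\T^d)$; by standard parabolic smoothing, $w \in \CC([0,1], \CC^\gamma(\T^d))$ with $\|w\|_{\CC([0,1],\CC^\gamma)} \lesssim \|v_0\|_{\CC^\gamma}$ (in fact $\|w_t\|_{\CC^\gamma} \le e^{-t}\|v_0\|_{\CC^\gamma}$ since $\gamma > 0$ and the semigroup $e^{t(\Delta - 1)}$ is a contraction on $\CC^\gamma$). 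Note $\gamma > -\alpha(p-1) > 0$ by hypothesis and \eqref{eq:beta_assump}.

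The key step is to observe that the mollified Wick powers transform according to the binomial identity already at the level of approximations: since $\tilde\Psi^\eps = \Psi^\eps + w^\eps$ and $w$ is a fixed smooth-in-space function, the Hermite binomial identity \eqref{eq:binomial} gives
\begin{equ}
H^{\sigma^2_\eps}_k(\tilde\Psi^\eps_t) = \sum_{j=0}^k \binom{k}{j} (w^\eps_t)^j \, H^{\sigma^2_\eps}_{k-j}(\Psi^\eps_t)\;.
\end{equ}
Now I would let $\eps \downarrow 0$. On the right-hand side, each factor $H^{\sigma^2_\eps}_{k-j}(\Psi^\eps)$ converges in $\CC([0,1],\CC^{\alpha(k-j)}(\T^d))$ in $L^q(\P_\xi)$ and $\P_\xi$-a.s. by the assumption $\Psi_0 \in \mfC$, while $(w^\eps)^j \to w^j$ in $\CC([0,1],\CC^\gamma(\T^d))$ deterministically (using that $\CC^\gamma$ is a Banach algebra). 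Since pointwise multiplication is continuous $\CC^\gamma \times \CC^{\alpha(k-j)} \to \CC^{\alpha(k-j)}$ when $\gamma > -\alpha(p-1) \geq -\alpha(k-j)$ — exactly the condition \eqref{eq:mult} used in Lemma \ref{lem:v_Psi} — each product $(w^\eps_t)^j H^{\sigma^2_\eps}_{k-j}(\Psi^\eps_t)$ converges in $\CC([0,1],\CC^{\alpha(k-j)}(\T^d)) \hookrightarrow \CC([0,1],\CC^{\alpha k}(\T^d))$ in $L^q$ and a.s. Summing over $j$, $H^{\sigma^2_\eps}_k(\tilde\Psi^\eps)$ converges in $\CC([0,1],\CC^{\alpha k}(\T^d))$ in $L^q(\P_\xi)$ and $\P_\xi$-a.s., which is precisely the requirement for $\Psi_0 + v_0 \in \mfC$, with
\begin{equ}
(\Psi_0+v_0)^{:k:} = \sum_{j=0}^k \binom{k}{j} w^j \, \Psi^{:k-j:}\;.
\end{equ}

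For the norm bound, I would take $\CC^{\alpha k}$-norms in the displayed identity for $(\Psi_0+v_0)^{:k:}$, apply the boundedness of multiplication and the Banach-algebra estimate $\|w^j\|_{\CC^\gamma} \lesssim \|v_0\|_{\CC^\gamma}^j \le \|v_0\|_{\CC^\gamma}^p$ (for $\|v_0\|_{\CC^\gamma} \geq 1$; the bounded case is handled by a trivial modification, or one simply keeps the factor $(1+\|v_0\|_{\CC^\gamma})^p$ throughout), then take $L^q(\P_\xi)$-norms and use the triangle inequality together with $\fancynorm{\Psi_0}_q = \max_k \{\E_\xi \|\Psi^{:k:}\|^q_{\CC([0,1],\CC^{\alpha k}(\T^d))}\}^{1/q}$ to get $\fancynorm{\Psi_0+v_0}_q \lesssim (1 + \|v_0\|_{\CC^\gamma}^p)(1 + \fancynorm{\Psi_0}_q)$, where the number of terms $k+1 \le p+1$ and the binomial coefficients are absorbed into the implied constant. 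I do not anticipate a genuine obstacle here: the proof is essentially identical in structure to Lemma \ref{lem:v_Psi}, the only real point being that one must phrase the binomial manipulation at the level of the $\eps$-approximations \emph{before} passing to the limit, since the limiting Wick powers of $\tilde\Psi$ are not defined a priori — but that is exactly what Definition \ref{def:mfC} is set up to handle. The mildest care is needed to confirm that $\gamma > -\alpha(p-1)$ indeed dominates $-\alpha(k-j)$ for all $0 \le j \le k \le p$, which is immediate since $k - j \le p \le p$, so $-\alpha(k-j) \le -\alpha(p-1)$ when $k-j \le p-1$, and the case $k-j = p$ forces $j = 0$, $k = p$, where the factor is $w^0 = 1$ and no multiplication bound is needed.
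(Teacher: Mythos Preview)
Your proposal is correct and follows essentially the same approach as the paper. The paper's proof simply applies Lemma~\ref{lem:v_Psi} to $\Psi$ and the heat flow $w_t = e^{t(\Delta-1)}v_0$; you unpack this application explicitly, carrying the binomial identity through the $\eps$-approximations and verifying both the a.s.\ and $L^q(\P_\xi)$ convergence that Definition~\ref{def:mfC} demands (the paper's citation of Lemma~\ref{lem:v_Psi} only literally gives the pathwise statement, so your extra care here is warranted). One cosmetic difference: the paper places $w$ in $\CC([0,1],\CC^\beta)$ for some $\beta\in(-\alpha(p-1),\gamma)$ and invokes Remark~\ref{rem:gamma_upper_bound}, whereas you work directly in $\CC([0,1],\CC^\gamma)$ using that the semigroup is a contraction there --- this is fine since the paper defines $\CC^\gamma$ as the closure of smooth functions, so the semigroup is strongly continuous on it.
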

\begin{proof}
This follows from Lemma \ref{lem:v_Psi} applied to $\Psi$ and the function $v_t = e^{t(\Delta-1)}v_0$, which is in $\CC([0,1],\CC^\beta(\T^d))$ for any $\beta\in (-\alpha(p-1),\gamma)$ (this $\beta$ takes the role of $\gamma$ in Lemma \ref{lem:v_Psi}; recall that no upper bound on $\gamma$ is required due to Remark \ref{rem:gamma_upper_bound}).
\end{proof}

For $\Psi_0\in\mfC$, by definition, $\Psi \in \mfA$ (i.e. $\bPsi \in \mfB$) almost surely.
Therefore, if \eqref{eq:beta_assump} holds,
then we can apply Proposition \ref{prop:local_well_posedness}
to obtain local-in-time (random) solutions $v$ to \eqref{eq:DPD_v}.
We next consider an assumption that these solutions are \emph{global}.

\begin{assumption}\label{ass:P_DPD_global}
\eqref{eq:beta_assump} holds and,
for $\eta,\gamma$ as in Proposition \ref{prop:local_well_posedness} and all $\Psi\in \mfA$ and $v_0\in\CC^\eta(\T^d)$, the equation \eqref{eq:DPD_v}
has a unique solution in $\CC((0,1],\CC^\gamma)$.
\end{assumption}
Since $\alpha>-\frac2p$ by \eqref{eq:beta_assump}, note that $\eta=\alpha$ satisfies the conditions of Proposition \ref{prop:local_well_posedness}.


\begin{definition}[Markov process]\label{def:Markov}
Suppose Assumption \ref{ass:P_DPD_global} holds.
Consider $\gamma>0$.
We define $u_t \in \CC^\alpha$ with initial condition $u_0\in \CC^{\alpha}$ as follows.
Fix any $\Psi_0\in\mfC$.
Let $v\in \CC((0,1],\CC^\gamma)$ be the solution to \eqref{eq:DPD_v} with initial condition $v_0 = u_0-\Psi_0\in\CC^{\alpha}$.
We then define $u_t = v_t+\Psi_t \in\CC^\alpha$ for $t\in [0,1]$.
\end{definition}

It may appear unnatural to make an arbitrary choice of $\Psi_0\in\mfC$ in the definition of $u$.
The next result implies that a different choice leads to the same definition of $u$.

\begin{proposition}\label{prop:Markov_welldefined}
Suppose we are in the setting of Definition \ref{def:Markov}.
Consider $\bar\Psi_0\in\mfC$ and let $\bar v$ solve
\begin{equ}
(\partial_t +1 -\Delta)\bar v = \Wick{P(\bar v+\bar \Psi)}\;,\quad \bar v_0 = u_0 - \bar \Psi_0 \in \CC^{\alpha}(\T^d)\;,
\end{equ}
where $\bar\Psi$ solves $(\partial_t+1-\Delta)\bar\Psi = \xi$ with initial condition $\bar\Psi_0$ and where
$\xi$ is the same noise appearing in the definition of $\Psi$.
Then, almost surely, $\bar v_t + \bar\Psi_t = u_t$ for all $t\in[0,1]$.
\end{proposition}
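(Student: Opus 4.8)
The plan is to show that the difference $w_t \eqdef \bar v_t - v_t$ (which \emph{a priori} lives in $\CC((0,1],\CC^\gamma)$ since both $v$ and $\bar v$ do) coincides with the deterministic function $\delta_t \eqdef \Psi_t - \bar\Psi_t$, so that $\bar v_t + \bar\Psi_t = v_t + \Psi_t = u_t$. First I would observe that $\delta \eqdef \Psi - \bar\Psi$ solves the linear equation $(\partial_t + 1 - \Delta)\delta = 0$ with initial condition $\delta_0 = \Psi_0 - \bar\Psi_0$; since $\Psi_0, \bar\Psi_0 \in \mfC \subset \CC^\alpha(\T^d)$, parabolic smoothing gives $\delta_t = e^{t(\Delta - 1)}(\Psi_0 - \bar\Psi_0)$, which lies in $\CC([0,1],\CC^\beta(\T^d))$ for every $\beta < \infty$ (in particular $\beta > -\alpha(p-1)$, and indeed $\beta \geq \gamma$), with $\delta_0$ itself only in $\CC^\alpha$. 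Then I would want to verify that $w = v + \delta - v$, more precisely that $\bar v$ and $v + \delta$ both solve the \emph{same} remainder equation, so that uniqueness in Assumption \ref{ass:P_DPD_global} forces $\bar v = v + \delta$ on $(0,1]$, hence on $[0,1]$ by continuity and the matching initial conditions.

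The key algebraic step is the Wick-binomial identity of Lemma \ref{lem:v_Psi} (equation \eqref{eq:binomial_v_Psi}): applied to the decomposition $\bar v + \bar\Psi = (v + \delta) + \bar\Psi$ and then shifting $\bar\Psi$ by $\delta$, it yields $(\bar v + \bar\Psi)^{:k:} = ((v+\delta) + \bar\Psi)^{:k:}$, and I would check that this equals $(v + (\delta + \bar\Psi))^{:k:} = (v + \Psi)^{:k:}$. The point is that $\delta$ is a genuine smooth (in space) function, so adding it commutes with the Wick renormalisation: concretely, $(\delta + \bar\Psi)^{:k:} = \sum_j \binom kj \delta^j \bar\Psi^{:k-j:}$ by Lemma \ref{lem:v_Psi} applied with $\bar\Psi$ in place of $\Psi$ and $\delta$ in place of $v$ (legitimate since $\delta \in \CC([0,1],\CC^\beta)$ with $\beta > -\alpha(p-1)$), and one recognises the right-hand side as exactly $\Psi^{:k:}$ because $\Psi = \bar\Psi + \delta$ and $\Psi \in \mfA$. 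Consequently $\Wick{P(\bar v + \bar\Psi)} = \Wick{P((v+\delta) + \Psi)}$ once we absorb $\delta$ into the first slot, i.e. $\bar v$ solves $(\partial_t + 1 - \Delta)\bar v = \Wick{P(\bar v + \bar\Psi)}$ iff $v + \delta$ solves $(\partial_t + 1 - \Delta)(v+\delta) = \Wick{P((v + \delta) + \Psi)}$, using $(\partial_t + 1 - \Delta)\delta = 0$. The latter is precisely \eqref{eq:DPD_v} for $v$ shifted by $\delta$; but in fact it is cleaner to phrase it as: $v$ and $\bar v - \delta$ both solve \eqref{eq:DPD_v} with noise $\Psi$ and initial condition $u_0 - \Psi_0$, whence $v = \bar v - \delta$ by uniqueness.

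A small bookkeeping point is the initial condition: $\bar v_0 - \delta_0 = (u_0 - \bar\Psi_0) - (\Psi_0 - \bar\Psi_0) = u_0 - \Psi_0 = v_0$, as required. One must also check that $\bar v - \delta$ has the regularity demanded by Assumption \ref{ass:P_DPD_global}, namely it lies in $\CC((0,1],\CC^\gamma)$: this holds because $\bar v \in \CC((0,1],\CC^\gamma)$ by hypothesis and $\delta \in \CC([0,1],\CC^\gamma)$ by parabolic smoothing. Finally, continuity of $t \mapsto v_t$ and $t \mapsto \bar v_t$ at $t = 0$ (in $\CC^\eta = \CC^\alpha$) together with the agreement on $(0,1]$ and the matching initial data extends the identity $\bar v_t - \delta_t = v_t$ to all of $[0,1]$, and adding $\Psi_t$ to both sides gives $\bar v_t + \bar\Psi_t = v_t + \Psi_t = u_t$. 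I expect the only genuinely delicate point to be making rigorous the ``$\delta$ commutes with Wick powers'' claim, i.e. carefully invoking Lemma \ref{lem:v_Psi} twice (once to move $\delta$ out of $\bar\Psi$, once to recognise the result as $\Psi^{:k:}$) and checking the regularity threshold $\beta > -\alpha(p-1)$ is met so that both applications are licensed; everything else is a routine consequence of the uniqueness statement in Assumption \ref{ass:P_DPD_global}.
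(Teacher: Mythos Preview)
Your approach is correct and genuinely different from the paper's. The paper dispatches the proposition in one line by invoking Remark~\ref{rem:renormalised_PDE}: both $v+\Psi$ and $\bar v+\bar\Psi$ arise as the $\eps\downarrow 0$ limit of the \emph{same} regularised PDE~\eqref{eq:renormalised_PDE}, whose data $(u_0,\xi)$ do not see the splitting into $(\Psi_0,v_0)$ versus $(\bar\Psi_0,\bar v_0)$. So the paper argues via stability of the mollified equation, whereas you argue directly at the level of the limiting objects by setting $\tilde v=\bar v-\delta$, checking algebraically that $\tilde v$ solves~\eqref{eq:DPD_v} with noise $\Psi$ and initial datum $v_0$, and then appealing to the uniqueness in Assumption~\ref{ass:P_DPD_global}. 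Your route is more elementary in that it avoids any reference to the approximating sequence $u^\eps$; the paper's route is shorter because the approximation already encodes the ``decomposition-independence'' for free.

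One small correction: you write that $\delta\in\CC([0,1],\CC^\beta)$ for every $\beta<\infty$, and later that $\delta\in\CC([0,1],\CC^\gamma)$. This is not true: $\delta_0=\Psi_0-\bar\Psi_0$ is only in $\CC^\alpha$, so continuity at $t=0$ fails in $\CC^\beta$ for any $\beta>\alpha$. What you actually have (and what suffices) is $\delta\in\CC((0,1],\CC^\beta)\cap\CC([0,1],\CC^\alpha)$ for every $\beta$. This is enough: the uniqueness in Assumption~\ref{ass:P_DPD_global} asks for solutions in $\CC((0,1],\CC^\gamma)$, which $\tilde v=\bar v-\delta$ satisfies, and the matching of initial data in $\CC^\alpha$ is handled by the $\CC([0,1],\CC^\alpha)$ continuity. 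Likewise, Lemma~\ref{lem:v_Psi} as stated requires $v\in\CC([0,1],\CC^\gamma)$, so to justify $(\delta+\bar\Psi)^{:k:}=\Psi^{:k:}$ you should either apply it on each $[\tau,1]$ with $\tau>0$, or simply note that $\Psi=\delta+\bar\Psi$ as distributions so their Wick powers (defined as limits of mollifications in Definition~\ref{def:mfA}) trivially coincide. With that adjustment your argument goes through.
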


\begin{proof}
This follows from Remark \ref{rem:renormalised_PDE}, which implies that $u$ and $\bar v+\bar\Psi$ are both limits of \eqref{eq:renormalised_PDE}.
\end{proof}

The following result now shows that the solution $u$ to \eqref{eq:SPDE} (in the sense of Definition \ref{def:Markov}) takes values in $\mfC$.

\begin{lemma}\label{lem:u_in_mfC}
Suppose we are in the setting of Definition \ref{def:Markov}
and that Assumption \ref{ass:covar_bound} holds.
Then, for every $t\in(0,1]$, $u_t \in \mfC$ almost surely.
\end{lemma}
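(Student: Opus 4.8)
The plan is to exploit the freedom, granted by Proposition~\ref{prop:Markov_welldefined}, to compute $u$ using a \emph{convenient} initial datum $\Psi_0\in\mfC$ for the auxiliary heat equation, and then to invoke invariance of $\nu$. Concretely, take $\Psi$ to be the \emph{stationary} solution of $(\partial_t+1-\Delta)\Psi=\xi$ on $\R\times\T^d$, so that $\Psi_0\sim\nu$ and, by the Markov property, $\Psi_0$ is independent of $\xi$ restricted to $[0,\infty)$; let $v$ solve \eqref{eq:DPD_v} with $v_0=u_0-\Psi_0\in\CC^\alpha(\T^d)$. Since $\nu(\mfC)=1$ by Proposition~\ref{prop:Psi0_mfC}, we have $\Psi\in\mfA$ almost surely, so Definition~\ref{def:Markov} applies with this (random) $\Psi_0$; and, conditioning on the value of $\Psi_0$ --- which by the independence just noted does not alter the conditional law of $\xi$ on $[0,\infty)$ --- Proposition~\ref{prop:Markov_welldefined} identifies the sum $v+\Psi$ almost surely with the process $u$ of Definition~\ref{def:Markov} (recall from Remark~\ref{rem:renormalised_PDE} that $u$ is a function of $u_0$ and $\xi$ alone).

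With the decomposition $u_t=v_t+\Psi_t$ in hand, the conclusion is immediate. Fix $t\in(0,1]$. First, by stationarity --- equivalently, by the computation of the spatial law of $\Psi_t$ recorded in the proof of Proposition~\ref{prop:Psi0_mfC} --- we have $\Psi_t\sim\nu$, hence $\Psi_t\in\mfC$ almost surely, again by Proposition~\ref{prop:Psi0_mfC}. Second, Assumption~\ref{ass:P_DPD_global} gives $v\in\CC((0,1],\CC^\gamma(\T^d))$ with $\gamma$ as in \eqref{eq:gamma_def}, so in particular $v_t\in\CC^\gamma(\T^d)$ with $\gamma>-\alpha(p-1)$. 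Lemma~\ref{lem:Psi_plus_v}, applied to $\Psi_t\in\mfC$ and $v_t$, then yields $u_t=\Psi_t+v_t\in\mfC$ almost surely.

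The only point requiring care is the first paragraph --- the replacement of the fixed $\Psi_0\in\mfC$ of Definition~\ref{def:Markov} by the stationary one. This is bookkeeping built on Remark~\ref{rem:renormalised_PDE}: the regularised equation \eqref{eq:renormalised_PDE} has initial datum $v_0^\eps+\Psi_0^\eps=(u_0-\Psi_0)^\eps+\Psi_0^\eps=u_0^\eps$, independent of $\Psi_0$, so $u=\lim_{\eps\downarrow0}u^\eps$ is unaffected by the choice, and conditioning on $\Psi_0$ then reduces everything to the deterministic statement of Proposition~\ref{prop:Markov_welldefined}. I do not expect a genuine difficulty beyond this. (An alternative keeping $\Psi_0$ fixed would write $\Psi_t=e^{t(\Delta-1)}\Psi_0+\hat\Psi_t$ with $\hat\Psi$ the heat equation started at $0$, but one would still need $\hat\Psi_t\in\mfC$, which appears to require either an equivalence-of-Gaussian-measures argument or an extension of Definition~\ref{def:mfC} to longer time horizons --- the stationary route avoids both.)
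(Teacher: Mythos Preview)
Your argument is correct. You randomise $\Psi_0\sim\nu$ (independent of the driving noise on $[0,1]$), use invariance of $\nu$ to get $\Psi_t\sim\nu$ and hence $\Psi_t\in\mfC$ a.s., and conclude via Lemma~\ref{lem:Psi_plus_v}. The conditioning/Fubini step you flag is indeed just bookkeeping: since $(\Psi_0,\xi|_{[0,1]})$ has product law $\nu\otimes\P_\xi$, Proposition~\ref{prop:Markov_welldefined} applied at $\nu$-a.e.\ fixed value $\psi_0\in\mfC$ gives the identification with the $u$ of Definition~\ref{def:Markov} on a set of full $\nu\otimes\P_\xi$-measure.

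The paper, however, takes precisely the route you sketch in your parenthetical and then discard. It keeps the deterministic $\Psi_0\in\mfC$ of Definition~\ref{def:Markov}, writes $\Psi_t=e^{t(\Delta-1)}\Psi_0+\hat\Psi_t$ with $\hat\Psi_t=\int_0^t e^{(t-s)(\Delta-1)}\xi_s\,\mrd s$, and observes that $\hat\Psi_t\eqlaw \Phi+f$ where $\Phi\sim\nu$ and $f\in\CC^\infty(\T^d)$ almost surely. This requires neither an equivalence-of-Gaussian-measures argument nor any extension of Definition~\ref{def:mfC}: simply extend $\xi$ to a two-sided noise and set $\Phi=\int_{-\infty}^t e^{(t-s)(\Delta-1)}\xi_s\,\mrd s$ (the stationary value, law $\nu$) and $f=-\int_{-\infty}^0 e^{(t-s)(\Delta-1)}\xi_s\,\mrd s$, which is a.s.\ smooth since $t>0$ forces an exponentially decaying heat kernel on this piece. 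Then $u_t=v_t+e^{t(\Delta-1)}\Psi_0+f+\Phi$ is $\Phi\in\mfC$ plus something in $\CC^\gamma$, and Lemma~\ref{lem:Psi_plus_v} finishes as in your proof. The trade-off: the paper avoids enlarging the probability space and the conditioning argument, at the cost of the one-line distributional decomposition of $\hat\Psi_t$; your approach is perhaps more conceptual (stationarity does the work) but carries the extra measurability layer. Both routes rest on the same two inputs, $\nu(\mfC)=1$ and Lemma~\ref{lem:Psi_plus_v}.
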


\begin{proof}
Let $\Psi$ and $v$ be as in Definition \ref{def:Markov}
and fix $t\in (0,1]$.
Then, almost surely, $v_t\in \CC^\gamma(\T^d)$.
Furthermore, we write $\Psi_t = e^{t(\Delta-1)} \Psi_0 + \int_0^t e^{(t-s)(\Delta-1)}\xi_s\mrd s$
wherein $e^{t(\Delta-1)} \Psi_0 \in \CC^\infty(\T^d)$.
We also have $\int_0^t e^{(t-s)(\Delta-1)}\xi_s\mrd s \eqlaw \Phi + f$,
where $\Phi \sim \nu$ and $f$ is almost surely in $\CC^\infty(\T^d)$.
By Proposition \ref{prop:Psi0_mfC}, $\Phi$ is almost surely in $\mfC$.
The conclusion now follows from Lemma \ref{lem:Psi_plus_v}.
\end{proof}

\begin{remark}
Propositions~\ref{prop:local_well_posedness} and~\ref{prop:Markov_welldefined} allow us to solve the $\Phi^4_{\delta}$ SPDE for $\delta < 14/5$ \eqref{eq:SPDE} by constructing Wick powers of the linear solution $\Psi$ and deriving a well-posed equation for a remainder $v$ with respect to this linear solution $\Psi$. 

A conceptually similar but more involved argument allows us to go beyond $\delta < 14/5$ - if we 
solved for the higher order remainder $w_t = v_t - \int_0^t e^{(t-s)\Delta}\Psi^{:p:}_s\mrd s$ then we would obtain a solution theory for a broader range of parameters, e.g. $\Phi^4_{\delta}$ for $\delta<3$.
However, these sorts of methods cannot be pushed forward in a straightforward way to cross the $\delta = 3$ threshold. 
\end{remark}

\subsection{The Markov process}
\label{sec:Markov}

By Lemma \ref{lem:u_in_mfC}, $u$ from Definition \ref{def:Markov} defines a Markov process on the set $\mfC\subset \CC^\alpha(\T^d)$.
Furthermore, for $u_0\in\mfC$, we may simply take $\Psi_0 = u_0$ and thus $v_0=0$ in Definition \ref{def:Markov}.
It is therefore natural to impose the following assumption on $P$, which is an analogue of Assumption \ref{ass:P}.

\begin{assumption}\label{ass:P_DPD}
Assumption \ref{ass:P_DPD_global} holds and there exists $m>0$ such that, if $\Psi\in\mfA$ and $v$ is the solution to \eqref{eq:DPD_v} with $v_0=0$,
then for all $s\in (0,1]$,
\begin{equ}[eq:u_apriori_DPD]
\|v_s\|_{\CC^\gamma(\T^d)} \leq (2+ \|\bPsi\|_{\mfB} + s^{-1})^m\;.
\end{equ}
\end{assumption}
The following is an analogue of Lemma \ref{lem:generator_classical}.
\begin{lemma}\label{lem:generator}
Suppose Assumption \ref{ass:P_DPD} holds.
Let $u_0 \in \mfC$.
Consider $\phi_1,\ldots, \phi_k \in \CC^\infty(\T^d)$ and $F \in \CC^3(\R^k)$ with $F^{(3)}$ of polynomial growth.
Denote
\begin{equ}
X_t \eqdef (\scal{ u_t,\phi_1}, \ldots, \scal{u_t,\phi_k}) \in \R^k\;.
\end{equ}
Then
\begin{equs}
\lim_{t\to 0}t^{-1}\E[F(X_t) - F(X_0)]
&= \sum_{i=1}^k \partial_i F(X_0)(\scal{ u_0, (\Delta-1)\phi_i } + \scal{\Wick{P(u_0)},\phi_i})
\\
&\quad + \frac{1}{2}\sum_{i,j=1}^k \partial_{i,j} F(X_0) \scal{\Cov_{\xi}*\phi_i,\phi_j}\;.
\end{equs}
and there exists $q>0$, depending only on $P$ and $F$, such that, for all $t\in (0,\frac12)$,
\begin{equ}
|t^{-1}\E[F(X_t) - F(X_0)]| \leq (2+\fancynorm{u_0}_q)^q\;.
\end{equ}
\end{lemma}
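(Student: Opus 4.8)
The proof will closely mirror that of Lemma \ref{lem:generator_classical}, working in the DPD decomposition $u = v + \Psi$ rather than directly with $u$. Since $u_0 \in \mfC$, we take $\Psi_0 = u_0$ and $v_0 = 0$ in Definition \ref{def:Markov}, so that $u_t = v_t + \Psi_t$ with $v$ solving the remainder equation \eqref{eq:DPD_v} and $\Psi$ solving $(\partial_t + 1 - \Delta)\Psi = \xi$ started from $u_0$. First I would Taylor-expand $F(X_t) - F(X_0)$ to second order exactly as in \eqref{eq:F_u_expan}, with the cubic remainder controlled by $K = \sup\{|F^{(3)}(x)| : |x| \leq \sup_{s \in [0,t]}(\|v_s\|_\infty + \|\Psi_s\|_{\CC^\alpha})\}$ — here one pairs against $\phi_i \in \CC^\infty$, so $|\scal{u_t, \phi_i}| \lesssim \|v_t\|_\infty + \|\Psi_t\|_{\CC^\alpha}$ even though $u_t \notin \CC$. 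The linear term $\scal{u_t - u_0, \phi_i}$ splits as $\scal{v_t - v_0, \phi_i} + \scal{\Psi_t - \Psi_0, \phi_i}$.

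For the $v$-contribution, write $v_t = e^{t(\Delta - 1)} v_0 + \int_0^t e^{(t-s)(\Delta-1)} \Wick{P(v_s + \Psi_s)} \mrd s$; since $v_0 = 0$ here, only the Duhamel integral survives, and as in the classical case one uses $\|e^{r(\Delta-1)} f - f\|_{\CC^{-N}} \lesssim r \|f\|_{\CC^{-N+2}}$ (applied with $f = \phi_i \in \CC^\infty$) to extract $t \scal{\Wick{P(u_0)}, \phi_i} + O(t^2 \cdot \text{stuff})$ plus an error of the form $t \sup_{s \in [0,t]} \|\Wick{P(v_s+\Psi_s)} - \Wick{P(u_0)}\|_{\CC^{\alpha p}}$, noting $\Wick{P(v_0 + \Psi_0)} = \Wick{P(\Psi_0)} = \Wick{P(u_0)}$ by \eqref{eq:binomial_v_Psi}. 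The $\Psi$-contribution is handled via $\Psi_t = e^{t(\Delta-1)}\Psi_0 + \int_0^t e^{(t-s)(\Delta-1)}\xi_s \mrd s$: the first piece gives $t \scal{\Psi_0, (\Delta-1)\phi_i} + O(t^2)$ using $\Psi_0 \in \CC^\alpha$ and $\phi_i$ smooth, and the stochastic convolution has zero mean, so it contributes nothing to the linear term in expectation. Adding these, the linear term yields $\scal{u_0, (\Delta-1)\phi_i} + \scal{\Wick{P(u_0)}, \phi_i}$ as claimed. The quadratic term is dominated, as in the classical case, by the stochastic-convolution–against–stochastic-convolution piece $\E \scal{\Psi_t^{\mathrm{stoch}}, \phi_i}\scal{\Psi_t^{\mathrm{stoch}}, \phi_j} = t \scal{\Cov_\xi * \phi_i, \phi_j} + O(t^2)$, with all cross-terms bounded via Cauchy--Schwarz by lower-order contributions.

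\textbf{The main obstacle.} The crux — and the only genuinely new ingredient relative to Lemma \ref{lem:generator_classical} — is the \emph{uniform moment bound} on $\sup_{s \in [0,1]}(\|v_s\|_{\CC^\gamma} + \|\bPsi\|_{\mfB})$ needed to (i) apply dominated convergence as $t \downarrow 0$ to kill $\sup_{s \in [0,t]}\|\Wick{P(v_s+\Psi_s)} - \Wick{P(u_0)}\|_{\CC^{\alpha p}}$, and (ii) produce the a priori bound $(2+\fancynorm{u_0}_q)^q$. For this I would combine three facts: the \emph{a priori} bound \eqref{eq:u_apriori_DPD} of Assumption \ref{ass:P_DPD} giving $\|v_s\|_{\CC^\gamma} \leq (2 + \|\bPsi\|_{\mfB} + s^{-1})^m$ on a short interval $(0,\tau]$ where $\tau$ is the local existence time from Proposition \ref{prop:local_well_posedness} (so $\tau^{-1} \asymp \|\bPsi\|_{\mfB}^{1/\kappa}$), then iterating/restarting to reach all of $(0,1]$; the bound $\|v_s\|_{\mfS} \lesssim \|v_0\|_{\CC^\eta} + 1$ from Proposition \ref{prop:local_well_posedness} on $(0,\tau]$; and the moment control $\E_\xi \|\bPsi\|_{\mfB}^q = \E_\xi \max_k \|\Psi^{:k:}\|^q_{\CC([0,1],\CC^{\alpha k})} \leq \fancynorm{u_0}_q^q$, which holds because $u_0 \in \mfC$ and this is precisely what the norm \eqref{eq:C_norm} measures. (Note $\Psi$ here is started from $u_0$, not stationary, but $\mfC$ is defined with respect to arbitrary deterministic initial data, so \eqref{eq:C_norm} applies verbatim.) Assembling these, $\E_\xi \sup_{s \in [0,1]} \|\Wick{P(v_s + \Psi_s)}\|_{\CC^{\alpha p}}^2 \leq (2 + \fancynorm{u_0}_q)^q$ for suitable $q$, and then dominated convergence and the cubic-remainder estimate $\E K |\scal{u_t - u_0, \phi_i}|^3 \lesssim t^{3/2}(2 + \fancynorm{u_0}_q)^q$ (using polynomial growth of $F^{(3)}$) close the argument exactly as before.
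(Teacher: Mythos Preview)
Your proposal is correct and follows essentially the same approach as the paper's proof: decompose $u_t - u_0$ via the mild formulation (the paper writes this directly for $u$ rather than splitting into $v$ and $\Psi$ pieces, but since $v_0 = 0$ and $\Psi_0 = u_0$ the two are identical), extract the first- and second-order terms exactly as in Lemma~\ref{lem:generator_classical}, and close by bounding $\E_\xi \sup_{s\in[0,1]}\|\Wick{P(u_s)}\|_{\CC^{\alpha p}}^2$ in terms of $\fancynorm{u_0}_q$.

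One small point of confusion in your write-up: you have the roles of the two bounds on $v$ swapped. The a~priori estimate~\eqref{eq:u_apriori_DPD} already holds on all of $(0,1]$ (Assumption~\ref{ass:P_DPD_global} gives global existence), so no iteration is needed; the issue is rather that it blows up like $s^{-m}$ near $s=0$. The paper therefore uses Proposition~\ref{prop:local_well_posedness} with $\eta=\gamma$ (legitimate since $v_0 = 0 \in \CC^\gamma$) on $[0,\tau]$, which gives $\sup_{s\in[0,\tau]}\|v_s\|_{\CC^\gamma}\leq C$ with no blow-up, and then applies~\eqref{eq:u_apriori_DPD} on $[\tau,1]$ where $s^{-1}\leq\tau^{-1}\leq(2+\|\bPsi\|_\mfB)^q$. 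You list both ingredients, just assigned to the wrong intervals; once corrected, the argument is exactly the paper's.
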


\begin{proof}
We let $q>0$ denote a sufficiently large number, depending only on $F$ and $P$, whose value may change from line to line.
We write
\begin{equ}
u_t-u_0
= (e^{t(\Delta-1)}u_0 - u_0)
+\int_0^t e^{(t-s)(\Delta-1)} \Wick{P( u_s )}\mrd s
+ \int_0^t e^{(t-s)(\Delta-1)} \xi_s \mrd s\;.
\end{equ}
Arguing like in the proof of Lemma \ref{lem:generator_classical},
for $\phi\in\CC^\infty(\T^d)$ we have
\begin{equ}
\scal{e^{t(\Delta-1)}u_0 - u_0,\phi} = t\scal{u_0,(\Delta-1)\phi} + O(t^2\|u_0\|_{\CC^{\alpha}})
\end{equ}
and
\begin{equs}{}
&\Big|
\scal{\int_0^t e^{(t-s)(\Delta-1)} \Wick{P( u_s )} \mrd s,\phi} - t\scal{ \Wick{P(u_0)},\phi}
\Big|
\\
&\qquad \lesssim t^2\|\Wick{P(u_0)}\|_{\CC^{\alpha p}(\T^d)}
+
t\sup_{s\in [0,t]} \|\Wick{P(u_s)} - \Wick{P(u_0)}\|_{\CC^{\alpha p}(\T^d)}\;.
\end{equs}
Let $\Psi_0=u_0$ and $v,\Psi$ be as in Definition \ref{def:Markov}, so that $u=v+\Psi$. 
By continuity of multiplication
\begin{equ}[eq:cont_mult]
\CC^\gamma(\T^d)\times \CC^{\alpha j}(\T^d) \ni(w,\psi) \mapsto w\psi \in\CC^{\alpha j}(\T^d)
\end{equ}
and almost sure continuity of $[0,1] \ni t\mapsto (v_t,\Psi^{:j:}_t) \in \CC^\gamma(\T^d)\times \CC^{\alpha j}(\T^d)$
for $j=0,\ldots,p$,
we have $\sup_{s\in [0,t]}\|\Wick{P(u_s)} - \Wick{P(u_0)}\|_{\CC^{\alpha p}(\T^d)} \to 0$ almost surely.

We next bound the moments of $\sup_{s\in [0,t]} \|\Wick{P(u_s)} \|_{\CC^{\alpha p}(\T^d)}$.
Applying Proposition \ref{prop:local_well_posedness} with $\gamma = \eta$,
there exists $\tau \in (0,\frac12]$ such that $\tau^{-1} \leq (2+\|\bPsi\|_{\mfB})^{q}$
and $\sup_{s\in [0,\tau]}\|v_s\|_{\CC^\gamma(\T^d)} \leq C$.
Moreover, by \eqref{eq:u_apriori_DPD} of Assumption \ref{ass:P_DPD}, we have
\begin{equ}
\sup_{s\in [\tau,1]}\|v_s\|_{\CC^\gamma}
\leq (2+\|\bPsi\|_\mfB + \tau^{-1})^m
\leq (2+\|\bPsi\|_\mfB)^q\;.
\end{equ}
Therefore $\E\sup_{s\in [0,1]}\|v_s\|_{\CC^\gamma}^2 \leq (2+\fancynorm{u_0}_q)^q$.
By continuity of multiplication \eqref{eq:cont_mult}
we thus have
\begin{equ}
\E \sup_{s\in [0,t]} \|\Wick{P(u_s)} \|_{\CC^{\alpha p}(\T^d)}^2 \leq  (2+\fancynorm{u_0}_q)^q
\end{equ}
(which is analogous to the bound \eqref{eq:Pu_s_moment}).
The rest of the proof follows in the same way as the proof of Lemma \ref{lem:generator_classical}.
\end{proof}

\subsection{Non-Gaussianity of the invariant measure}
\label{sec:non_gauss}

Suppose in the remainder of this section that we are in the setting of Definition \ref{def:Markov}.
Let $\mu$ be a measure on $\CC^\alpha(\T^d)$ which is invariant for the Markov process $u_t$ from Definition \ref{def:Markov}.
By Lemma \ref{lem:u_in_mfC}, $\mu(\mfC) = 1$.

\begin{assumption}\label{ass:moments}
$\E_{u_0\sim \mu} \fancynorm{u_0}_q^q < \infty$ for all $q\in [1,\infty)$.
\end{assumption}
We assume for the rest of this section that Assumptions \ref{ass:covar_bound}, \ref{ass:P_DPD}, and \ref{ass:moments} hold.

\begin{remark}\label{rem:verify_ass}
To verify Assumption \ref{ass:moments},
it would suffice, by Lemma \ref{lem:Psi_plus_v}, that $P$ is linear so that $\mu$ is Gaussian, or that an a priori estimate holds of the form
\begin{equ}
\|v_1\|_{\CC^\gamma(\T^d)} \lesssim (2+\|\bPsi\|_{\mfB})^q
\end{equ}
\textit{uniformly} over initial conditions $v_0$
(this is a strengthening of Assumption \ref{ass:P_DPD}).
This form of `coming down from infinity' is known
for the $\Phi^4_2$ model \cite{Mourrat_Weber_17_Phi42, Tsatsoulis_Weber_18_SG}
and for more singular equations \cite{MoinatWeber20,CMW23}.
Since this moment estimate is a separate task, we prefer to leave it as an assumption.
\end{remark}

\begin{lemma}\label{lem:gen_poly}
Let $\phi \in \CC^\infty(\T^d)$ .
Then, for every integer $k \geq 1$,
\begin{equs}
\E_{u_0\sim\mu}
\Big[ k\scal{ u_0, \phi }^{k-1}(\scal{ u_0, (\Delta-1)\phi }
&+ \scal{ \Wick{P(u_0)} ,\phi})
\\
&+ \frac{k(k-1)}{2} \scal{ u_0, \phi }^{k-2} \scal{\Cov_\xi *\phi, \phi} \Big] = 0\;,
\end{equs}
where the final term in the expectation is zero if $k=1$.
\end{lemma}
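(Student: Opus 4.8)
The plan is to mirror the proof of Lemma~\ref{lem:gen_poly_classical} exactly, now using Lemma~\ref{lem:generator} in place of Lemma~\ref{lem:generator_classical}. First I would fix $\phi\in\CC^\infty(\T^d)$ and, for an integer $k\geq 1$, apply Lemma~\ref{lem:generator} with the single test function $\phi_1=\phi$ (so $k=1$ in the notation of that lemma) and with $F(x)=x^k$ on $\R$. Since $F^{(3)}(x)=k(k-1)(k-2)x^{k-3}$ has polynomial growth, the hypotheses of Lemma~\ref{lem:generator} are met, so writing $X_t=\scal{u_t,\phi}$ we obtain both the limit formula
\begin{equs}
\lim_{t\to 0}t^{-1}\E[X_t^k - X_0^k]
&= k\scal{u_0,\phi}^{k-1}\big(\scal{u_0,(\Delta-1)\phi} + \scal{\Wick{P(u_0)},\phi}\big)
\\
&\quad + \frac{k(k-1)}{2}\scal{u_0,\phi}^{k-2}\scal{\Cov_\xi*\phi,\phi}
\end{equs}
and the a priori bound $|t^{-1}\E[X_t^k-X_0^k]|\leq (2+\fancynorm{u_0}_q)^q$ for all $t\in(0,\tfrac12)$, valid for $\mu$-a.e.\ $u_0\in\mfC$ (recall $\mu(\mfC)=1$ by Lemma~\ref{lem:u_in_mfC}).

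Next I would integrate over $u_0\sim\mu$ and exchange the limit with the expectation:
\begin{equs}
\E_{u_0\sim\mu}\Big[&k\scal{u_0,\phi}^{k-1}\big(\scal{u_0,(\Delta-1)\phi} + \scal{\Wick{P(u_0)},\phi}\big) + \frac{k(k-1)}{2}\scal{u_0,\phi}^{k-2}\scal{\Cov_\xi*\phi,\phi}\Big]
\\
&= \E_{u_0\sim\mu}\Big[\lim_{t\to 0}t^{-1}\E_\xi[X_t^k - X_0^k]\Big]
= \lim_{t\to 0}\E_{u_0\sim\mu}\big[t^{-1}\E_\xi[X_t^k - X_0^k]\big] = 0\;.
\end{equs}
The interchange of limit and $\E_{u_0\sim\mu}$ is justified by dominated convergence, using the a priori bound from Lemma~\ref{lem:generator} as the dominating function: by Assumption~\ref{ass:moments}, $\E_{u_0\sim\mu}(2+\fancynorm{u_0}_q)^q<\infty$, so the integrand is uniformly dominated by an $L^1(\mu)$ function. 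The final equality uses invariance of $\mu$: for each fixed $t$, invariance gives $\E_{u_0\sim\mu}\E_\xi[X_t^k] = \E_{u_0\sim\mu}\E_\xi F(X_t) = \E_{u_0\sim\mu}F(X_0) = \E_{u_0\sim\mu}[X_0^k]$, so the bracket inside the limit vanishes identically in $t$.

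There is no serious obstacle here since all the analytic work has been done in Lemma~\ref{lem:generator} and the structure is identical to Lemma~\ref{lem:gen_poly_classical}; the only point requiring a word of care is checking that Assumption~\ref{ass:moments} does supply the $L^1(\mu)$-domination needed for the interchange of limit and expectation, and that the a priori bound of Lemma~\ref{lem:generator} is stated uniformly enough in $t$ (it is, on $t\in(0,\tfrac12)$, which suffices for the $t\to 0$ limit). I would also note explicitly that the $k=1$ case is consistent since then the $\scal{u_0,\phi}^{k-2}$ term drops out, exactly as in the statement.

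\begin{proof}
Fix $\phi\in\CC^\infty(\T^d)$ and an integer $k\geq 1$. Apply Lemma~\ref{lem:generator} with the single test function $\phi_1=\phi$ and $F(x)=x^k$ (so that $F^{(3)}$ has polynomial growth). Writing $X_t = \scal{u_t,\phi}$, Lemma~\ref{lem:generator} gives, for every $u_0\in\mfC$,
\begin{equs}[eq:gen_poly_limit]
\lim_{t\to 0}t^{-1}\E_\xi[X_t^k - X_0^k]
&= k\scal{u_0,\phi}^{k-1}\big(\scal{u_0,(\Delta-1)\phi} + \scal{\Wick{P(u_0)},\phi}\big)
\\
&\quad + \frac{k(k-1)}{2}\scal{u_0,\phi}^{k-2}\scal{\Cov_\xi*\phi,\phi}\;,
\end{equs}
together with the bound $|t^{-1}\E_\xi[X_t^k - X_0^k]| \leq (2+\fancynorm{u_0}_q)^q$ for all $t\in(0,\tfrac12)$, where $q>0$ depends only on $k$. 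When $k=1$ the last term in \eqref{eq:gen_poly_limit} is absent, matching the statement.

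By Lemma \ref{lem:u_in_mfC}, $\mu(\mfC)=1$, so \eqref{eq:gen_poly_limit} holds for $\mu$-a.e.\ $u_0$. By Assumption~\ref{ass:moments}, $\E_{u_0\sim\mu}(2+\fancynorm{u_0}_q)^q<\infty$, so the family $\{t^{-1}\E_\xi[X_t^k - X_0^k]\}_{t\in(0,1/2)}$ is dominated in $L^1(\mu)$ by the function $u_0\mapsto (2+\fancynorm{u_0}_q)^q$. Hence, by dominated convergence,
\begin{equs}
\E_{u_0\sim\mu}\Big[&k\scal{u_0,\phi}^{k-1}\big(\scal{u_0,(\Delta-1)\phi} + \scal{\Wick{P(u_0)},\phi}\big) + \frac{k(k-1)}{2}\scal{u_0,\phi}^{k-2}\scal{\Cov_\xi*\phi,\phi}\Big]
\\
&= \lim_{t\to 0}\E_{u_0\sim\mu}\big[t^{-1}\E_\xi[X_t^k - X_0^k]\big]\;.
\end{equs}
Finally, for each fixed $t\in(0,\tfrac12)$, invariance of $\mu$ gives $\E_{u_0\sim\mu}\E_\xi[X_t^k] = \E_{u_0\sim\mu}[X_0^k]$, so $\E_{u_0\sim\mu}[t^{-1}\E_\xi[X_t^k - X_0^k]] = 0$ for all such $t$, and the limit above is $0$. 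This is the claimed identity.
\end{proof}
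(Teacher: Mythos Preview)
Your proof is correct and follows exactly the approach of the paper, which simply states that the argument is identical to that of Lemma~\ref{lem:gen_poly_classical} with Lemma~\ref{lem:generator} replacing Lemma~\ref{lem:generator_classical}. You have written out the details (dominated convergence via Assumption~\ref{ass:moments} and invariance of $\mu$) more explicitly than the paper does, but the content is the same.
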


\begin{proof}
This is identical to the proof of Lemma \ref{lem:gen_poly_classical}
upon substituting the use of Lemma \ref{lem:generator_classical} by Lemma \ref{lem:generator}.
\end{proof}

\begin{theorem}\label{thm:non_G}
Suppose the degree $p$ of $P$ is odd and $p\geq 3$.
Then $\mu$ is not Gaussian.
\end{theorem}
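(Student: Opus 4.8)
The plan is to mimic the proof of Theorem~\ref{thm:non_G_classical}, using the generator identity of Lemma~\ref{lem:gen_poly} as a substitute for the classical generator identity, and then deriving a contradiction at the top Wiener chaos from a Gaussianity assumption. First I would fix $\phi\in\CC^\infty(\T^d)$ with $\scal{\Cov_\xi*\phi,\phi}=1$ and set $X=\scal{u_0,\phi}$, $Y=\scal{\Wick{P(u_0)},\phi}$, $Z=\scal{u_0,(\Delta-1)\phi}$, so that Lemma~\ref{lem:gen_poly} reads
\begin{equ}
\E\big[k X^{k-1}(Y+Z)\big] + \tfrac{k(k-1)}{2}\E\big[X^{k-2}\big] = 0\;, \qquad \forall k\geq 1\;,
\end{equ}
with the last term zero for $k=1$. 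As in the classical case, this means $\E Q(X)(Y+Z) = -\tfrac12\E Q'(X)$ for every polynomial $Q$, and after centring ($\hat X = X - \E X$) we get $\E Q(\hat X)(Y+Z) = -\tfrac12\E Q'(\hat X)$. Since the right-hand side and the term $\E Q'(\hat X)/2$ behave exactly as in the proof of Theorem~\ref{thm:non_G_classical}, the Hermite recursion~\eqref{eq:def_hermite} gives, for $Q_k = H^{\sigma^2}_k$ with $\sigma^2 = \E\hat X^2$, that $\E Q_k(\hat X)(Y+Z) = 0$ for all $k\in\N\setminus\{1\}$, and since $Z$ lies in the first inhomogeneous chaos (under the Gaussianity assumption), $\E Q_k(\hat X) Y = 0$ for all $k\geq 2$.

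The new subtlety compared to the classical case is that $Y=\scal{\Wick{P(u_0)},\phi}$ involves \emph{Wick} powers rather than ordinary powers, and that $u_0$ is a genuine distribution, not a function, so I must make sense of the top-chaos computation at the level of distributions paired against smooth test functions. Assume for contradiction that $\mu$ is Gaussian; write $u_0$ as a centred Gaussian field on $\T^d$ (centredness may be assumed after noting $\E\scal{u_0,\phi}$ is an affine functional, or handled as in the classical proof) with covariance operator $C$ on $L^2(\T^d)$, which under Assumption~\ref{ass:moments} is bounded $L^1\to L^\infty$ by the same argument as before. The point is that $\scal{u_0,\phi}$ is in the first homogeneous chaos, so $Q_p(\hat X) = H^{\sigma^2}_p(\scal{u_0,\phi})$ has Wiener--It\^o decomposition $I_p(\psi^{\otimes p})$ where $\psi$ is the element of the Cameron--Martin space $H$ (completion of smooth functions under $\scal{C\,\cdot,\cdot}$) represented by $\phi$. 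On the other hand, because $\mu$ is Gaussian, the local Wick power $\Psi^{:j:}$ built from the linear solution coincides (at the level of the stationary field $u_0$) with the \emph{global} Wick renormalisation of $u_0^j$, i.e.\ $\scal{\Wick{u_0^j},\phi}$ is the genuine multiple Wiener integral $I_j$ against a kernel built from $\phi$ — this is the standard fact that, for a Gaussian measure, DPD-Wick powers and Nelson-Wick powers agree, which I would justify via the defining mollified limit in Definition~\ref{def:mfC} together with the hypercontractive convergence of $H^{\sigma_\eps^2}_j(u_0^\eps)$. Consequently the top ($p$-th) chaos component of $Y$ is $\theta\, I_p$ of a symmetric kernel whose diagonal pairing with $\psi^{\otimes p}$ equals $\theta\int_{\T^d}(C\phi)(x)^p\phi(x)\,\mrd x$, where $\theta\neq 0$ is the leading coefficient of $P$; lower-degree terms of $P$ contribute only to lower chaoses. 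Orthogonality of distinct Wiener chaoses then forces
\begin{equ}[eq:top_chaos_DPD]
0 = \E Q_p(\hat X)\, Y = \theta \int_{\T^d} (C\phi)(x)^p\,\phi(x)\,\mrd x\;.
\end{equ}

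Finally I would derive the contradiction exactly as in the classical proof: pick a non-zero real eigenfunction $\phi\in L^2(\T^d)$ of $C$ with eigenvalue $\lambda>0$ (which exists since $C$ is a non-zero positive compact operator — here I use that $C$ is bounded $L^1\to L^\infty$, hence Hilbert--Schmidt on $L^2$), approximate it by $\phi_n\in\CC^\infty(\T^d)$ with $\phi_n\to\phi$ in $L^2$, use boundedness of $C\colon L^2\to L^\infty$ to pass $\scal{(C\phi_n)^p,\phi_n}\to \lambda^p\scal{\phi^p,\phi}$, which is strictly positive because $p$ is odd and $\phi$ is real and non-zero, yet~\eqref{eq:top_chaos_DPD} applied to each $\phi_n$ (after rescaling $\phi_n$ so that $\scal{\Cov_\xi*\phi_n,\phi_n}=1$, noting this normalisation is harmless) gives $\scal{(C\phi_n)^p,\phi_n}=0$ — a contradiction. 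The main obstacle I expect is the second paragraph: carefully justifying that on a Gaussian $\mu$ the DPD local Wick powers $\scal{\Wick{P(u_0)},\phi}$ really are the corresponding global multiple Wiener integrals, so that the chaos decomposition and orthogonality arguments apply verbatim; everything else is a transcription of the classical argument with $(\Delta-1)$ in place of $\Delta$.
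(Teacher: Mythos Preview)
Your overall strategy is exactly the paper's: apply the generator identity, assume $\mu$ Gaussian, use Hermite orthogonality to force $\E Q_k(\hat X)Y=0$ for $k\ge2$, and read off a contradiction at the top chaos via an eigenfunction of $C$. The Wick-power subtlety you flag is real but milder than you make it: since $H^{\sigma^2}_p(x)=x^p+(\text{lower order})$ for \emph{any} $\sigma^2$, the $p$-th chaos component of $\Wick{P(u_0)}$ (Wick-ordered with respect to $\nu$) coincides with that of the $\mu$-Wick polynomial; the paper simply records this in a footnote rather than arguing via mollified limits.

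The genuine gap is your claim that $C$ is bounded $L^1\to L^\infty$ (hence $L^2\to L^\infty$, hence Hilbert--Schmidt) ``by the same argument as before''. In the classical proof this came from $\mu$ being supported on $\CC(\T)$, so $C(x,y)=\E[u_0(x)u_0(y)]$ was pointwise bounded. In the DPD regime $u_0\in\CC^\alpha$ with $\alpha<0$ is a genuine distribution, and its covariance kernel is singular on the diagonal; the $L^1\to L^\infty$ bound fails. This breaks both your compactness argument for producing an eigenfunction and the passage to the limit $\scal{(C\phi_n)^p,\phi_n}\to\lambda^p\scal{\phi^p,\phi}$, since $\phi$ need not lie in $L^{p+1}$ a priori and $C\phi_n$ need not converge in $L^\infty$.

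The paper repairs this by using the dynamic once more: writing $u_0\eqlaw u_1=g+\Phi$ with $\Phi\sim\nu$ and $g\in\CC^\gamma$, one combines the kernel bound $|K(x,y)|\lesssim|x-y|^\rho$ from Assumption~\ref{ass:covar_bound} with Young's inequality (using $\rho p>-d$) and the moment bounds on $g$ from Assumption~\ref{ass:P_DPD} to obtain $C\colon L^q\to L^{2p}$ bounded, where $\tfrac{1}{2p}+\tfrac1q=1$. This weaker mapping property suffices: any $L^2$-eigenfunction $\phi$ of $C$ then lies in $L^{2p}$, so $\phi^{p+1}\in L^1$; and $\phi_n\to\phi$ in $L^2$ gives $C\phi_n\to\lambda\phi$ in $L^{2p}$, hence $(C\phi_n)^p\to\lambda^p\phi^p$ in $L^2$, which is enough to pass to the limit. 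You should replace your $L^1\to L^\infty$ step by this argument.
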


\begin{proof}
Following the proof of Theorem \ref{thm:non_G_classical}, and using the shorthand $X = \scal{u_0,\varphi}$, $Y = \scal{\Wick{P(u_0)},\varphi}$, and $Z = \scal{u_0,(\Delta-1)\varphi}$ for $\phi\in\CC^\infty(\T^d)$,
Lemma \ref{lem:generator} implies the recursion $
\E{X^{k}(Y-Z)} = \frac{k}{2}\E{X^{k-1}}$
for all $k\geq 0$, where the right-hand side is understood as zero for $k=0$.

Suppose now that $\mu$ is Gaussian.
Similar to the the proof of Lemma \ref{lem:u_in_mfC},
we write $u_0\eqlaw u_1 = g + \Phi$ where $\Phi \sim \nu$ and $g\in \CC^\gamma(\T^d)$.
Let us write $K(x,y) = \E[\Phi(x)\Phi(y)]$.
By Assumption \ref{ass:covar_bound} and Young's convolution inequality, for $\frac{1}{r}+1 = \frac1p + \frac1q$,
\begin{equ}
\Big\|\int K(\cdot,y)\phi(y)\mrd y \Big\|_{L^r} \leq
\Big\|\int|\cdot-y|^\rho |\phi(y)| \mrd y
\Big\|_{L^r} \lesssim \|\phi\|_{L^q}
\end{equ}
where we used that $\rho p > -d$ so $\||\cdot|^\rho\|_{L^p}<\infty$.
Hence the convolution operator $K\colon L^q\to L^r$ is bounded.
Taking $r=2p$, so that $\frac1{2p} + \frac1q = 1$,
we obtain
\begin{equ}
\E\scal{\Phi,\phi}^2 = \scal{K\phi,\phi} \lesssim \|\phi\|_{L^q}^2\;.
\end{equ}
Furthermore, by Assumption \ref{ass:P_DPD}, $\E \|g\|_{\CC^\gamma}^q < \infty$ for all $q>0$.
Therefore
\begin{equ}
|\E\scal{\Phi,\phi}\scal{g,\psi}|
\leq \E[\scal{\Phi,\phi}^2]^{1/2}\E[\scal{g,\psi}^2]^{1/2}
\lesssim \|\phi\|_{L^q}\|\psi\|_{L^1}\;,
\end{equ}
and $\E\scal{g,\phi}^2 \lesssim \|\phi\|_{L^1}^2$.
It readily follows that
\begin{equ}
|\E\scal{u_0,\phi}\scal{u_0,\psi}| \lesssim \|\phi\|_{L^q}\|\psi\|_{L^q}\;.
\end{equ}
Hence, by duality, the covariance $C(x,y)= \E[u_0(x)u_0(y)]$ defines via convolution a bounded operator $C\colon L^q \to L^{2p}$.

Using the notation $Q_k$ and $\hat X$ from Theorem \ref{thm:non_G_classical}, it follows from an identical argument that
$\E Q_k(\hat X) Y = 0$ for all $k\geq 2$.
Recall also that $Q_k(\hat X)$ has the Wiener-It\^o decomposition $
Q_k (\hat X) = I_k (\phi^{\otimes k})$.

Using that $Y=\int_{\T^d} \Wick{P(u_0)}(x) \phi(x)\mrd x$,
it follows from the Wiener--It\^o decomposition of $\Wick{P(u_0)}$ at the top chaos\footnote{Note that $\Wick{P(u_0)}$ is generally in the \emph{inhomogeneous} $p$-th Wiener chaos of $\mu$ since the Wick ordering of $\Wick{P(u_0)}$ is with respect to $\nu$.} that
\begin{equ}
0 = \E Q_p(\hat X) Y
\propto \int_{\T^d}  (C\phi)(x)^{p}
\phi(x) \mrd x\;,
\end{equ}
where $p\geq 2$ is the degree of $P$.
By considering a smooth approximation of a non-zero real eigenfunction of $C$
 with eigenvalue $\lambda > 0$
 and using that $C\colon L^2\to L^{2p}$ is bounded, we obtain a contradiction in the same way as the end of the proof of Theorem \ref{thm:non_G_classical}.
\end{proof}

\begin{remark}\label{rem:Phi32}
Similar to Remark \ref{rem:Phi31}, our method of proof can handle more general polynomial non-linearities of the type~\eqref{eq:general_P}.
For example, it shows the invariant measure of
\begin{equ}
(\partial_t +1 -\Delta) u = u^{:2:} - \Big(\int_{\T^2} {u^{:2:}}\Big) u + \xi\;, \qquad u_0\in \CC^{\alpha}(\T^2)\;,
\end{equ}
which is the $\Phi^3_2$ measure
\begin{equ}
\mu(\mrd u) = Z^{-1} \exp\Big( -\frac13\int_{\T^2} {u^{:3:}} - \frac14\Big(\int_{\T^2} {u^{:2:}}\Big)^2 \Big) \nu(\mrd u)\;,
\end{equ}
is non-Gaussian.
\end{remark}

Another SPDE that is covered by the Da Prato--Debussche argument is the dynamical Sine-Gordon model in two spatial dimensions, given by
\begin{equ}\label{eq:SPDE_SG}
(\partial_t +1 -\Delta ) u = \Wick{\sin(\beta u)} + \sqrt{2} \xi\;,
\end{equ}
where $\beta \in \R$ with $\beta^{2} < 4 \pi$ and $\xi$ is a space-time white noise on $\R \times \T^{2}$.
Here the Wick trigonometrical functions are defined by setting 
\begin{equs}[e:wick_trig]
\Wick{\sin(\beta \bullet )} &= \lim_{\eps \downarrow 0} e^{\beta^{2} \E\Psi_{\eps}^{2}(0)} \sin(\beta \bullet )\;,\\
\Wick{\cos(\beta \bullet )} &= \lim_{\eps \downarrow 0}  e^{\beta^{2} \E\Psi_{\eps}^{2}(0)} \cos(\beta \bullet )\;,
\end{equs}
where $\Psi$ is the stationary-in-time solution to the linear equation. 

Writing $u = \Psi + v$ as before, one can show (again, with the restriction $\beta^{2} < 4 \pi$) the local well-posedness of the equation
\begin{equ}\label{eq:SG_remainder_eq}
(\partial_t - \Delta + 1) v = \sin( \beta v) \Wick{\cos( \beta \Psi)} + \cos( \beta v) \Wick{\sin(\beta \Psi)} 
\end{equ}

Local well-posedness for the dynamic \eqref{eq:SPDE_SG} in a probabilistically strong sense was first studied in \cite{HaoSG}.
Since \eqref{eq:SPDE_SG} is the Langevin dynamic for the Sine-Gordon Euclidean Quantum Field Theory, it would be interesting to see 
if the arguments of this article could be applied to show that any invariant measure of \eqref{eq:SPDE_SG} is non-Gaussian. 
This seems plausible, but there are some differences in this model that would require modifications to the argument. 

One difference is that $\Wick{\sin(\beta \Psi)}$ does not have a ``top level Wiener chaos'' like the polynomial interactions do, but this by itself is not an obstruction since $\Wick{\sin(\beta \Psi)}$ is in $L^{2}(\Omega,\nu)$ with infinitely many chaos contributions. 
The tricky step seems to be ruling out that, if $\mu$ is an invariant measure of \eqref{eq:SPDE_SG}, then $\mu$ is not a Gaussian with a covariance that has a stronger blow-up than the covariance of the stationary distribution of $\Psi$.  
In the Wick power case, the existence of $\Wick{u^{2}}$
implies that the blow-ups of the covariance of $\mu$ and the covariance of $\Psi$ have to be comparable. However, the same argument does not work for $\Wick{\sin(\beta \Psi)}$ since the renormalization in \eqref{e:wick_trig} is blowing up something going to $0$ rather than cancelling out a divergence. 

\appendix

\section{Da Prato--Debussche regime vs. absolute continuity}
\label{app:abs_cont}

In this appendix, we show that the DPD trick, as described n Section \ref{subsec:DPD_trick}, works in situations
where the invariant measure of $u$ is singular with respect to $\nu$, the Gaussian measure from Definition \ref{def:mfA}, which is invariant for the equation $(\partial_t +1-\Delta)\Psi = \xi$.

For simplicity, suppose $\Cov_\xi = (1-\Delta)^{\beta}$ where $\beta\leq 0$ and $\rho\eqdef 2-d-2\beta\leq 0$.
(In particular, we suppose $d\geq 2$.) 
Then
\begin{equ}
|(1-\Delta)^{-1}*\Cov_\xi(x)| = |(1-\Delta)^{-1+\beta}(x)| \lesssim
\begin{cases}
1+|\log |x| |  &\text{if } \rho = 0
\\
|x|^\rho  &\text{if } \rho<0
\end{cases}
\end{equ}
(see, e.g. the proof of \cite[Prop.~A.1]{CM24}).
To apply the results of Section \ref{subsec:DPD_trick},
due to Assumption \ref{ass:covar_bound}, we require $\rho>2\alpha$ and $\rho > -d/p$ for some $\alpha <0$.
Due to \eqref{eq:beta_assump}, we furthermore require $\alpha > 2/(1-2p)$.
To ensure the existence of such $\alpha$, we thus need
\begin{equ}[eq:DPD_rho]
\rho > \max \{4/(1-2p) , -d/p\}\;.
\end{equ}

On the other hand, a recent result of \cite{Hairer_24_singular} is that $\Law(u_t)$ for $t>0$ (and thus any invariant measure of $u$) is singular with respect to $\nu$.
With the notation of \cite[Sec.~3]{Hairer_24_singular} on the left-hand side and ours on the right-hand side, we have $m=\beta$, $\sigma=2$, $k=p$, $n_i=0$, and $A = \rho p/2$.
Therefore \cite[Assumption 3.2]{Hairer_24_singular} corresponds to $\rho<0$.
Furthermore the first part of \cite[Assumption 3.3]{Hairer_24_singular} is equivalent to $\rho p + d + 2 > 0$, which is weaker than our assumption $\rho p + d>0$.\footnote{This discrepancy is due to our necessity of having the $p$-th Wick power defined as a function of time and not just as a space-time distribution.}
The second part of \cite[Assumption 3.3]{Hairer_24_singular} is equivalent to $\rho p + d + 2 > -2\beta = \rho - 2 +d$, i.e. $\rho > -4/(p-1)$, which is again weaker than $\rho > 4/(1-2p)$.\footnote{This discrepancy is due to us considering the DPD regime, while the whole subcritical regime is considered in \cite{Hairer_24_singular}.}
It is also simple to verify that \eqref{eq:DPD_rho} implies the final \cite[Assumption 3.5]{Hairer_24_singular}.

It thus follows from \cite[Thm~3.9]{Hairer_24_singular} that $\Law(u_t)$ is singular with respect to $\nu$ whenever \eqref{eq:DPD_rho} holds, $\rho<0$, and
\begin{equ}[eq:abs_cont1]
\rho p  + 2 \leq -2\beta = \rho +d - 2\;.
\end{equ}
For $p > 1$, \eqref{eq:abs_cont1} is equivalent to
\begin{equ}[eq:abs_cont2]
\rho  \leq \frac{d-4}{p -1}
\end{equ}
while for $p=1$, \eqref{eq:abs_cont1} is equivalent simply to $d\geq 4$.


We now claim that, for $p\geq 3$ and $d>2$, there exists $\rho< 0$ satisfying \eqref{eq:DPD_rho} and \eqref{eq:abs_cont2}.
(For $d=2$, the only possibility is $\rho=\beta=0$, in which case the result of \cite{Hairer_24_singular} does not apply directly.)
Indeed,
\begin{itemize}
\item
for $d\geq 4$, clearly all $\rho\leq 0$ satisfy \eqref{eq:abs_cont2}, so the claim follows from the fact that the right-hand side of \eqref{eq:DPD_rho} is strictly negative,

\item for $d=3$, 
\eqref{eq:DPD_rho} is equivalent to $\rho > 4/(1-2p)$ while \eqref{eq:abs_cont2} is equivalent to $\rho \leq -1/(p-1)$,
so the claim follows from the fact that $4/(1-2p) < -1/(p-1)$ for $p>\frac32$.
\end{itemize}

\begin{example}\label{ex:Phi4}
Take $p=3$ and $d=3$. Then, by \eqref{eq:abs_cont2}, we have singularity of $\Law(u_t)$ with respect to $\nu$ for $\rho \leq -1/2$, i.e. $\alpha < -1/4$, which
corresponds to the $\Phi^4_{\delta}$ model with $\delta\geq \frac52$ in
the setting of \cite[Sec.~2.8.2]{BCCH17} (with $\T^3$ taken therein instead of $\T^4$).
On the other hand, the DPD regime, by \eqref{eq:DPD_rho}, requires only $\rho > -4/5$, i.e. $\alpha > -2/5$,
which corresponds to the $\Phi^4_{\delta}$ model with $\delta<\frac{14}{5}$.
\end{example}

\noindent
{\textbf{Acknowledgements}.}
IC gratefully acknowledges support from the DFG CRC/TRR 388 ``Rough
Analysis, Stochastic Dynamics and Related Fields'' through a Mercator Fellowship held at TU Berlin.
AC gratefully acknowledges partial support by the EPSRC through EP/S023925/1 through the “Mathematics
of Random Systems” CDT EP/S023925/1.
AC and IC thank the referee for helpful suggestions. 

\endappendix
\bibliographystyle{./Martin}
\bibliography{./refs}

\end{document}